\newtheorem{prop}{Proposition}[section]
\newtheorem{thm}[prop]{Theorem}
\newtheorem{cor}[prop]{Corollary}
\newtheorem{lem}[prop]{Lemma}
\newtheorem{prob}[prop]{Problem}
\newtheorem{rem}[prop]{Remark}
\newtheorem{example}[prop]{Example}
\numberwithin{equation}{section}
\newcommand{\C}{\mathbb{C}}
\newcommand{\F}{\mathbb{F}}
\newcommand{\Z}{\mathbb{Z}}
\newcommand{\recursionRc}[3]{{R_{#2,#3}=(q-1) R_{#2,#1 #3} + q R_{#1 #2, #1 #3}}}
\newcommand{\LL}{\Lambda}
\newcommand\Des{{\mathrm{Des}}}
\begin{document}

\title{ $R$-polynomials of finite monoids of Lie type}

\author{K\"ur\c{s}at Aker,\\ Mah\.ir B\.ilen Can,\\ M\"uge Ta\c{s}k{\i}n.}
\address{Feza G\"ursey Institute, Tulane University, Bosphorus University.}
\email{aker@gursey.gov.tr, mcan@tulane.edu, mtaskin@mathstat.yorku.ca}
\keywords{Orbit Hecke algebras, Renner monoids, $R$-polynomials}

\begin{abstract}This paper concerns the combinatorics of the orbit Hecke algebra associated with the orbit of a two sided Weyl group action on the Renner monoid of a finite monoid of Lie type, $M$. It is shown by Putcha in \cite{Putcha97}  that the Kazhdan-Lusztig involution (\cite{KL79}) can be extended to the orbit Hecke algebra which enables one to define the $R$-polynomials of the intervals contained in a given orbit. Using the $R$-polynomials, we calculate the M\"obius function of the Bruhat-Chevalley ordering on the orbits. Furthermore, we provide a necessary condition for an interval contained in a given orbit to be isomorphic to an interval in some Weyl group. 
\end{abstract}

\maketitle

\section{Introduction}

Let $G=G(\F_q)$ be a finite group of Lie type (see \cite{Carter}) , $B\subseteq  G$  a Borel subgroup, $T\subseteq B$ a maximal torus, $W$ the Weyl group of $T$ and $S$ the set of simple reflections for $W$ corresponding to $B$. Set 
\begin{equation}\label{E:epsilon}
\epsilon = \frac{1}{|B|}\sum_{g\in B} g.
\end{equation}  
By a fundamental theorem of Tits, it is known that the algebra $\epsilon \C[G] \epsilon$ is isomorphic to the group algebra $\C[W]$. 

The generic Hecke algebra $\mathcal{H}(W)$ of $G$, which  is a deformation of the group algebra $\C[W]$, is a fundamental tool in combinatorics, geometry and the representation theory of $G$. As a $\Z[q^{1/2},q^{-1/2}]$-algebra, $\mathcal{H}(W)$ is generated by a set of formal variables $\{T_w\}_{w\in W}$ indexed by the Weyl group $W$ and obeys a corresponding multiplication rule. 

In \cite{Sol90}, Solomon introduces the first example of a Hecke algebra for monoids, in the case of the monoid ${\mathbf M}_n(\F_q)$, $n\times n$ matrices over a finite field $\F_q$. In  a series of papers (\cite{Putcha93}, \cite{Putcha97}, \cite{Putcha99}), Putcha extends the theory of  Hecke algebras of matrices to all finite regular monoids. In particular, he defines the orbit Hecke algebra 
${\mathcal H}(J)$ for a $\mathcal{J}$-class $J$ in a finite regular monoid.

Let $M$ be a finite monoid of Lie type and $J$ a $\mathcal{J}$-class in $M$. Finite monoids of Lie type are  regular monoids, and the $\mathcal{J}$-class $J=GeG$ in $M$ is a $G\times G$-orbit in $M$ of the subgroup $G\subseteq M$ of invertible elements, where $e$ is an idempotent of $M$. Here, $G$ is a finite group of Lie type. (We use the notation $\mathcal{H}(e)$ in place of $\mathcal{H}(J)$.)

The generic orbit Hecke algebra $\mathcal{H}(e)$ for $GeG$ is what $\mathcal{H}(W)$ is for $G$. In other words, $\mathcal{H}(e)$ is 
a deformation of the contracted semigroup algebra $\epsilon \C_0[J^0] \epsilon$ (the zero of the algebra is the zero of $J^0=J\cup \{0\}$), and $\epsilon$ is as  in (\ref{E:epsilon})).

In \cite{Putcha97}, Putcha extends the Kazhdan-Lusztig involution 
$\overline{T_w} = T^{-1}_{w^{-1}}$ to the generic orbit Hecke algebras.  Using this involution, he defines analogues of $R$-polynomials and $P$-polynomials of \cite{KL79}.  In this article, we investigate the combinatorial properties of these
$R$-polynomials. We show that given an interval contained in an orbit $WeW$ inside the Renner monoid $R$ of the monoid $M$ (\cite{Renner86}),  the constant term of the corresponding $R$-polynomial equals the value of the M\"obius function on the given interval. Using this observation, we give a criterion for when a subinterval of $WeW$ can be embedded into a Weyl group as a subinterval. 


\section{Background}

The monoids of Lie type are introduced and classified by Putcha in \cite{Putcha89} and \cite{Putcha94}.  Among the important examples of these monoids are the finite reductive monoids, \cite{Renner95}. We begin with the notation of reductive monoids. For more information, interested readers may consult \cite{Renner04} and \cite{Putcha88}. For an easy introduction on reductive monoids we especially recommend the expos\'e by L. Solomon \cite{Sol95}.

Let $K$ be an algebraically closed field. An algebraic monoid over $K$ is an irreducible variety $M$ such that the product map is a morphism of varieties.  The set $G=G(M)$ of invertible elements of $M$ is an algebraic group. If $G$ is a reductive group, $M$ is called a \emph{reductive monoid}.

Let $B\subseteq G$ be a Borel subgroup, $T\subseteq B$ a maximal torus, $W=N_G(T)/T$ the Weyl group of the pair $(G,T)$ and $S$ the set of simple reflections for $W$ corresponding to $B$, $\ell$ and $\leq$ the length function
and the Bruhat-Chevalley order corresponding to $(W,S)$.

Recall that the Bruhat-Chevalley decomposition 
\begin{equation*}
G= \bigsqcup_{w\in W} B\dot{w}B,\ \text{for}\ w=\dot{w}T \in N_G(T)
\end{equation*}
of the reductive group $G$ is controlled by the Weyl group $W$ of $G$, where $\dot{w}$ is any coset representative of $w\in W$.

In a reductive monoid $M$, the Weyl group $W$ of the pair $(G,T)$ 
and the set $E(\overline{T})$ 
of idempotents of the embedding $\overline{T} \hookrightarrow M$ form a finite inverse semigroup
$R=\overline{N_G(T)}/T \cong W\cdot E(\overline{T})$ with the 
unit group $W$ and the idempotent set $E(R)=E(\overline{T})$.
The inverse semigroup $R$, called the \emph{Renner monoid} of $M$, governs the Bruhat decomposition
of the reductive monoid $M$:
\begin{eqnarray*}
M= \bigsqcup_{r\in R} B\dot{r}B,\ \text{for}\ r=\dot{r}T \in \overline{N_G(T)}.
\end{eqnarray*}

Recall that the Bruhat-Chevalley order for $(W,S)$ is defined by
\begin{equation*}
x \leq y\ \mbox{iff}\ BxB \subseteq \overline{ByB}.
\end{equation*}
Similarly, on the Renner monoid $R$ of a reductive monoid $M$, the Bruhat-Chevalley order is defined by 
\begin{equation}\label{E:BRordering}
\sigma \leq \tau\ \mbox{iff}\ B\sigma B \subseteq \overline{B\tau B}.
\end{equation}
Observe that the poset structure on $W$ induced from $R$ 
agrees with the original Bruhat poset structure on $W$.

Let $E(\overline{T})$ be the set of idempotent elements in the Zariski closure
of the maximal torus $T$ in the monoid $M$. Similarly, denote  the set of idempotents in the monoid $M$ by $E(M)$. 
One has $E(\overline{T}) \subseteq E(M)$. There is a canonical partial
order $\leq$ on $E(M)$ (hence on $E(\overline{T})$) defined by  
\begin{equation}\label{E:crossorder}
e\leq f\ \Leftrightarrow   \ ef=e=fe.
\end{equation}
Note that $E(\overline{T})$ is invariant under the conjugation action of the Weyl group $W$. We call a subset $\Lambda \subseteq E(\overline{T})$ 
a \emph{cross-section lattice} if $\LL$ is a set of representatives 
for the $W$-orbits on $E(\overline{T})$ 
and the bijection $\LL \rightarrow G \backslash M / G$ defined by 
$e \mapsto GeG$ is order preserving. Then, $\LL=\LL(B) = \{ e\in E(\overline{T}):\ Be=eBe\}$.  

The decomposition $M= \bigsqcup_{e\in \LL} GeG$,
of a reductive monoid $M$ into its $G\times G$ orbits, has a counterpart for the Renner
monoid $R$ of $M$. Namely, the finite monoid $R$ can be written as a disjoint union 
\begin{equation}\label{E:RennerDecomposition}
R = \bigsqcup_{e\in \LL} WeW
\end{equation}
of $W\times W$ orbits, parametrized by the cross-section lattice $\Lambda$.

For $e\in \Lambda$, define
\begin{eqnarray*}
W(e) &:=& \{x\in W:\ xe=ex\}, \\
W_e  &:=& \{x\in W: xe=e \} \trianglelefteq W(e).
\end{eqnarray*}
Both $W(e)$ and $W_e$ are parabolic subgroups of $W$. 

By $D(e)$ and $D_e$, denote 
the minimal coset representatives of $W(e)$ and $W_e$ respectively:
\begin{eqnarray*}
D(e) &:=&  \{x\in W:\ x\ \text{is of minimum length in}\ xW(e) \},\\
D_e &:=& \{x\in W:\ x\ \text{is of minimum length in}\ xW_e \}.
\end{eqnarray*}

Any given element $\sigma \in WeW$ has  {\em the standard form} $x e y^{-1}$
for unique $x$ and $y,$ where $x\in D_e$, and $y\in D(e)$ and $\sigma=x e y^{-1}$.

The length function for $R$ with respect to $(W,S)$ is defined as follows:

Let $w_0$ and $v_0$ be the longest elements of $W$ and $W(e)$ respectively. 
Then $w_0v_0$ is the longest element of $D(e)$.  Set 
\begin{eqnarray*}
\ell(e) & := & \ell(w_0 v_0) = \ell(w_0) - \ell(v_0) \; \text{and}\\ 
\ell(\sigma) & := & \ell(x) + \ell(e) - \ell(y).  
\end{eqnarray*}
Note that $\ell$, in general, need not be equal to the rank function on the graded poset $(R,\leq)$. However, when restricted to a $\mathcal{J}$-class $WeW\subseteq R$ it is equal to the rank function on the induced poset $(WeW,\leq)$.  

By \cite{PPR97}, we know that given two elements $\theta$ and $\sigma$ in the standard form 
$\theta = u e v^{-1} \in WeW$ and $\sigma=xfy^{-1} \in WfW$, 
\begin{equation}
\theta \leq \sigma\ \Longleftrightarrow\ e \leq f,\ u \leq xw,\ yw \leq v 
\qquad \text{for some} \; w\in W(f)W_e.
\end{equation}

\begin{rem}
More generally, let $M$ be a finite monoid of Lie type, and $G\subseteq M$ its group of invertible elements. It is shown by Putcha, \cite{Putcha89} that $M$ has a Renner monoid $R$ as well as a cross section lattice $\Lambda \subset E(M)$. Furthermore,  all of what is said above is true for $R$ and $\Lambda$ of $M$. 
\end{rem}

\section{Orbit Hecke Algebras}

Let $M$ be a finite monoid of Lie type. We use the notation of the previous section.

In \cite{Sol90}, Solomon constructs the Hecke algebra 
and 
the generic Hecke algebra
for the monoid $\mathbf{M}_n(\F_q)$, $n\times n$ matrices over the finite field $\F_{q}$ with $q$ elements.

Until the end of the section, we let $q$ be an indeterminate instead of a prime power. 
Following  Solomon's construction in \cite{Sol90}, the \emph{ generic} Hecke algebra $\mathcal{H}(R)$ of the Renner monoid of $M$ is defined as follows:

The generic Hecke algebra $\mathcal{H}(R)$ is a $\Z[q^{-\frac{1}{2}}, q^{\frac{1}{2}}]$-algebra generated by a formal basis $\{A_{\sigma}\}_{\sigma \in R}$ with respect to multiplication rules

\begin{equation}\label{hecke.structure}
\begin{aligned} 
A_sA_{\sigma}&=
\begin{cases}  A_{\sigma}&\text{if} ~\ell(s\sigma)=\ell(\sigma)\\
A_{s\sigma}&\text{if} ~\ell(s\sigma)=\ell(\sigma)+1\\
q^{-1}A_{s\sigma}+(1-q^{-1})A_{\sigma}&\text{if} ~ \ell(s\sigma)=\ell(\sigma)-1\\
\end{cases} \\
A_{\nu}A_{\sigma}&=A_{\nu \sigma}
\end{aligned}
\end{equation}
for $s\in S,\ \sigma,\ \nu \in R$, where $\ell(\nu)=0$. The products $A_{\sigma} A_s$ are defined similarly.

Fix $e\in \Lambda$ and let $\mathcal{I}\subseteq \mathcal{H}(R)$ be the two-sided ideal
\begin{equation}
\mathcal{I}= \bigoplus_{f< e, \sigma \in WfW} \Z[q^{-\frac{1}{2}},q^{\frac{1}{2}}] A_{\sigma} \subseteq \mathcal{H}(R).
\end{equation}
The \emph{orbit Hecke algebra} 
\begin{equation}
\mathcal{H}(e) = \bigoplus_{\sigma \in WeW} \Z[q^{-\frac{1}{2}},q^{\frac{1}{2}}] A_{\sigma}
\end{equation}
is an ideal of $\mathcal{H}(R)/\mathcal{I}$.  
Note that, if the idempotent $e\in \Lambda$ is the identity element $\mathrm{id}\in W$ of the Weyl group, then the  generic orbit Hecke algebra $\mathcal{H}(e)$  is isomorphic to generic Hecke algebra $\mathcal{H}(W)$ of $G$. 

The algebra $\widehat{\mathcal{H}}(e) := \mathcal{H}(e) + \mathcal{H}(W)$ is called the \emph{augmented orbit Hecke algebra}.

\begin{thm}[\cite{Putcha97}, Theorem 4.1] \label{T:Putcha41}
There is a unique extension of the involution on $\mathcal{H}(W)$ to $\widehat{\mathcal{H}}(e)$ such that for $e \in WeW$ and $\sigma \in WeW$
in standard form $\sigma = s e t^{-1}$,
\[
\begin{array}{lcl}
\overline{A}_{e} & := & 
\sum_{z\in W(e),\ y\in D(e)} \overline{R}_{z,y} A_{zey^{-1}}, \\
\overline{A}_{\sigma} & := & q^{-\ell(t)} \overline{A}_s \sum_{z\in W(e),\ y\in D(e)} \overline{R}_{tz,y} A_{zey^{-1} }.
\end{array}
\]
Here $R_{z,y},\ R_{tz,y} \in \Z[q]$ are $R$-polynomials of $W$. 
\end{thm}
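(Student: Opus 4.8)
The plan is to regard the sought-for bar operation as a $\Z[q^{-\frac12},q^{\frac12}]$-semilinear ring involution of $\widehat{\mathcal{H}}(e)$ restricting to the Kazhdan--Lusztig involution of $\mathcal{H}(W)$, and to collapse the whole statement onto a single computation with $A_e$. First I would record that $\mathcal{H}(e)$ is a \emph{cyclic} $\mathcal{H}(W)$-bimodule with generator $A_e$: for $x\in D_e$ the length-additive rule $A_sA_\sigma=A_{s\sigma}$ gives $A_xA_e=A_{xe}$ by induction on $\ell(x)$ (each prefix of a reduced word for $x$ lengthens $e$ because $x$ is a minimal coset representative), and right multiplication by the invertible elements $A_y$ then shows $\mathcal{H}(e)=\mathcal{H}(W)\,A_e\,\mathcal{H}(W)$. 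Since any extension is additive, multiplicative, and already fixed on $\mathcal{H}(W)$, its value on $h_1A_eh_2$ must be $\overline{h_1}\,\overline{A_e}\,\overline{h_2}$; hence the extension is uniquely determined by $\overline{A_e}$. This simultaneously reduces uniqueness to showing that $\overline{A_e}$ is forced, and existence to exhibiting one admissible value of $\overline{A_e}$ that propagates consistently.

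Next I would pin down $\overline{A_e}$. The defining relations $A_sA_e=A_e$ for $s\in W_e$ transform, upon applying bar and using $\overline{A_s}=A_s^{-1}$, into the eigen-relations $A_s\overline{A_e}=\overline{A_e}$; together with the analogous right-hand relations coming from $W(e)$ they confine $\overline{A_e}$ to the span of $\{A_{zey^{-1}}:z\in W(e),\,y\in D(e)\}$ and impose $W_e$-invariance on its coefficients. Imposing in addition the triangularity normalization (coefficient $1$ on the top term $A_e$) and involutivity $\overline{\overline{A_e}}=A_e$ leaves a unique solution. The cleanest way to identify it is to recognize $\mathcal{H}(e)$, as a based $\mathcal{H}(W)$-bimodule, as the module induced along $W_e$ and $W(e)$, whose canonical bar involution is computed by parabolic $R$-polynomials that expand in the ordinary $R$-polynomials of $W$ exactly as the coefficients $\overline{R_{z,y}}$ in the stated formula. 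The displayed formula for a general $\sigma=set^{-1}$ then follows by computing $\overline{A_\sigma}$ from $\overline{A_s}\,\overline{A_e}\,\overline{A_{t^{-1}}}$: because $t\in D(e)$ makes right multiplication by $A_{t^{-1}}$ length-\emph{decreasing}, the third branch of (\ref{hecke.structure}) contributes the scalar $q^{-\ell(t)}$ and converts each $R_{z,y}$ into $R_{tz,y}$.

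For existence I would \emph{define} the map by the stated formulas and verify, in turn, that it is $\Z[q^{-\frac12},q^{\frac12}]$-semilinear (immediate), that it restricts to the given involution on $\mathcal{H}(W)$ (built in), that its output lies in $\widehat{\mathcal{H}}(e)$, that it is multiplicative against each generator $A_s$ ($s\in S$) and each length-zero generator $A_\nu$ on both sides (a generator-by-generator check driven by the recursions for the $R$-polynomials of $W$, cf.\ \cite{KL79}), and finally that it squares to the identity. The last point is the crux: substituting the formula into itself and collecting terms reduces, via the cyclic-bimodule reduction above, to the single identity $\overline{\overline{A_e}}=A_e$, which in turn unwinds to a classical inversion relation of the form $\sum_{z\le u\le y}\overline{R_{z,u}}\,R_{u,y}=\delta_{z,y}$ in $W$ --- precisely the statement that the Kazhdan--Lusztig bar on $\mathcal{H}(W)$ is an involution.

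The main obstacle I anticipate is the bookkeeping that keeps every intermediate expression supported on standard-form elements $zey^{-1}$ with $z\in W(e)$ and $y\in D(e)$. Because the standard form demands $s\in D_e$ and $t\in D(e)$, the right multiplication by $A_{t^{-1}}$ is length-decreasing and must be processed through the third branch of (\ref{hecke.structure}); controlling the resulting $q^{-1}$-corrections and the coset-representative collapses $z=z'z''$ with $z''\in W_e$ (so that $zey^{-1}=z'ey^{-1}$), while checking that they reassemble into the $R_{tz,y}$ pattern, is the technical heart of the argument. I would manage this by induction on $\ell(t)$, and symmetrically on $\ell(s)$, feeding in the defining recursions of the $R$-polynomials of $W$ at each step.
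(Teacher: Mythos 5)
First, a point of order: the paper you were given does not prove this statement at all --- Theorem \ref{T:Putcha41} is imported verbatim from \cite{Putcha97} (Putcha's Theorem 4.1) and used as a black box, so there is no in-paper proof to compare against; your attempt must be judged against Putcha's argument and on its own internal logic. On that score, your overall architecture is reasonable and broadly consonant with how such extensions are handled (reduce everything to $\overline{A_e}$ via the fact that $\mathcal{H}(e)$ is generated as an $\mathcal{H}(W)$-bimodule by $A_e$, which is correct since suffixes of minimal coset representatives are again minimal, so $A_xA_e=A_{xe}$ for $x\in D_e$; then define the map by the displayed formulas and check multiplicativity generator by generator, with involutivity unwinding to the inversion identity $\sum_{z\le u\le y}\overline{R}_{z,u}R_{u,y}=\delta_{z,y}$ in $W$).

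The genuine gap is in your uniqueness argument: the relations you invoke do not pin down $\overline{A_e}$. The only eigen-relations available are $A_s\overline{A_e}=\overline{A_e}=\overline{A_e}A_s$ for $s\in W_e\cap S$; for $s\in \bigl(W(e)\setminus W_e\bigr)\cap S$ one has $\ell(es)=\ell(se)=\ell(e)+1$, so the ``analogous right-hand relations coming from $W(e)$'' are merely commutation relations $A_s\overline{A_e}=\overline{A_e}A_s$, not eigen-relations. These constraints neither confine the support of $\overline{A_e}$ to $\{A_{zey^{-1}}: z\in W(e),\ y\in D(e)\}$ nor rule out a component in the $\mathcal{H}(W)$-summand of $\widehat{\mathcal{H}}(e)=\mathcal{H}(e)+\mathcal{H}(W)$ (an automorphism need not preserve the ideal $\mathcal{H}(e)$ a priori; this must be argued). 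Worse, your ``triangularity normalization (coefficient $1$ on the top term $A_e$)'' is imposed, not derived: the theorem asserts uniqueness among semilinear multiplicative involutive extensions, so any normalization must be forced by those axioms alone, and you have not forced it. The subsequent appeal to a ``canonical bar involution'' on a module induced along $W_e$ and $W(e)$, computed by parabolic $R$-polynomials, is where all the hidden work lives: Deodhar-type parabolic modules are one-sided and themselves admit inequivalent bar structures, whereas the present structure is two-sided, with the sum over $z\in W(e)$ collapsing along $W_e$-cosets ($zey^{-1}=z'ey^{-1}$ for $z\in z'W_e$); matching the two is asserted, not proven, and is precisely the content of Putcha's computation. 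A smaller but real issue of the same kind: since $A_sA_eA_{t^{-1}}=q^{-\ell(t)}A_{set^{-1}}+(\text{terms }A_{sey^{-1}}\text{ with }y<t)$ by the third branch of (\ref{hecke.structure}), $\overline{A_\sigma}$ is \emph{not} a scalar multiple of $\overline{A_s}\,\overline{A_e}\,\overline{A_{t^{-1}}}$, and the claimed conversion of $R_{z,y}$ into $R_{tz,y}$ rests entirely on the triangular corrections you defer to the induction on $\ell(t)$ --- you flag this as the technical heart, correctly, but it remains unexecuted, and both existence and the explicit formula depend on it.
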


\begin{cor}[\cite{Putcha97},Corollary 4.2] \label{C:Putcha41} 
Let $\sigma \in WeW$. Then there exists $R_{\theta, \sigma} \in \Z[q]$ for 
$\theta \in WeW$, such that in $\widehat{\mathcal{H}}(e)$, 
\begin{enumerate}
\item[(i)] $\overline{A}_{\sigma} = q^{\ell(\sigma)-\ell(e)} \sum_{\theta \in WeW} \overline{R}_{\theta, \sigma} A_{\theta}$,
\item[(ii)] $R_{\theta, \sigma} \neq 0$ only if $\theta \leq \sigma$,
\item[(iii)] $R_{\theta,\theta}=1$.
\end{enumerate}
\end{cor}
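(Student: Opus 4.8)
The plan is to read off all three assertions from Theorem \ref{T:Putcha41} by reorganizing the expansion it provides, and then to run an induction on the length of the $D_e$-component of $\sigma$. Since the right-hand sides in Theorem \ref{T:Putcha41} are supported on $WeW$, the element $\overline{A}_\sigma$ lies in $\mathcal{H}(e)=\bigoplus_{\theta\in WeW}\Z[q^{\pm 1/2}]A_\theta$; writing $\overline{A}_\sigma=\sum_{\theta}b_{\theta,\sigma}A_\theta$, I would \emph{define} $R_{\theta,\sigma}:=\overline{q^{\ell(e)-\ell(\sigma)}b_{\theta,\sigma}}$, so that (i) holds by construction. It then remains to show that each $R_{\theta,\sigma}$ is a genuine element of $\Z[q]$, that it vanishes unless $\theta\le\sigma$, and that $R_{\sigma,\sigma}=1$. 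I would do this by induction on $\ell(a)$, where $\sigma=aeb^{-1}$ is the standard form ($a\in D_e$, $b\in D(e)$), taking $a=\mathrm{id}$ as the base case.

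For the base case $\sigma=eb^{-1}$, Theorem \ref{T:Putcha41} reads $\overline{A}_{eb^{-1}}=q^{-\ell(b)}\sum_{z\in W(e),\,y\in D(e)}\overline{R}_{bz,y}A_{zey^{-1}}$, where the $R_{bz,y}$ are ordinary $R$-polynomials of $W$. Collecting the terms that produce a fixed $\theta=x_0ey_0^{-1}$ in standard form, I would note that $zey^{-1}=x_0ey_0^{-1}$ forces $y=y_0$ and $z\in x_0W_e$, so the relevant $z$ are those in $x_0W_e\cap W(e)$, which is all of $x_0W_e$ when $x_0\in W(e)$ and empty otherwise. This yields the closed formula $R_{\theta,eb^{-1}}=\sum_{z\in x_0W_e}R_{bz,y_0}$ (and $0$ if $x_0\notin W(e)$), from which integrality is immediate. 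For the support, nonvanishing forces $bz\le y_0$ in $W$ for some $z\in x_0W_e\subseteq W(e)$; since $x_0$ is the minimal-length representative of its coset we have $x_0\le z$, and the criterion of \cite{PPR97} applied with the witness $w=z\in W(e)$ gives $\theta\le eb^{-1}$. For the normalization, taking $\theta=eb^{-1}$ (so $x_0=\mathrm{id}$, $y_0=b$), only $z=\mathrm{id}$ survives because $b$ is minimal in $bW(e)$, leaving $R_{b,b}=1$.

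For the inductive step I would choose a simple reflection $s$ with $sa<a$; then $sa\in D_e$, the element $\sigma'=(sa)eb^{-1}$ is the standard form of $s\sigma$ with $\ell(\sigma')=\ell(\sigma)-1$, and $A_\sigma=A_sA_{\sigma'}$. Because the involution extends the Kazhdan--Lusztig involution and is therefore multiplicative, $\overline{A}_\sigma=\overline{A}_s\,\overline{A}_{\sigma'}$ with $\overline{A}_s=qA_s+(1-q)A_{\mathrm{id}}$. Substituting the inductive expansion of $\overline{A}_{\sigma'}$ and evaluating each product $A_sA_\theta$ by the rules (\ref{hecke.structure}) produces the Renner-monoid analogue of the classical $R$-polynomial recursion: $R_{\phi,\sigma}=R_{s\phi,\sigma'}$ if $s\phi<\phi$; $R_{\phi,\sigma}=(q-1)R_{\phi,\sigma'}+qR_{s\phi,\sigma'}$ if $s\phi>\phi$; and the \emph{new} case $R_{\phi,\sigma}=qR_{\phi,\sigma'}$ when $s\phi=\phi$. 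This last case has no counterpart in $W$; here I would use that $s$ is a unit of $R$, so left multiplication by $s$ is injective and $A_sA_\phi=A_\phi$ contributes to the coefficient of $A_\phi$ only through $\theta=\phi$. All three cases keep the coefficients in $\Z[q]$, and since $\phi=\sigma$ falls in the first case (giving $R_{\sigma,\sigma}=R_{\sigma',\sigma'}=1$ by induction), integrality and normalization both follow.

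The hard part will be the support condition in the descent case: from $R_{\phi,\sigma}=R_{s\phi,\sigma'}\neq 0$ I know only that $s\phi\le\sigma'=s\sigma$, and I must deduce $\phi\le\sigma$. This is exactly a lifting property for the Bruhat--Chevalley order on $WeW$: if $s\sigma>\sigma$ then $\theta\le\sigma\Rightarrow s\theta\le s\sigma$ (equivalently, for $s\phi<\phi$ and $s\sigma<\sigma$ one has $\phi\le\sigma\Leftrightarrow s\phi\le s\sigma$). I expect this to be the main obstacle, since, unlike in a Coxeter group, the poset $WeW$ need not be well behaved, and left multiplication by $s$ can either change the $D_e$-component of a standard form or fix an element outright. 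I would establish the required instance directly from the criterion of \cite{PPR97}, reducing it to the ordinary lifting property in $W$ while adjusting the existential witness $w\in W(e)$ and isolating the subcase in which $s$ fixes the relevant element. Once this lemma is in hand, the support condition, and with it the corollary, follows.
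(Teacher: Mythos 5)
The paper does not prove Corollary \ref{C:Putcha41} at all: it is quoted verbatim from \cite{Putcha97}, so your argument is necessarily an independent reconstruction, and it is a sound one, essentially Putcha's own derivation from Theorem \ref{T:Putcha41}. Your base-case formula $R_{\theta,eb^{-1}}=\sum_{z\in x_0W_e}R_{bz,y_0}$ (zero unless $x_0\in D_e\cap W(e)$) is correct, including the PPR witness $w=z$ (using that the minimal coset representative satisfies $x_0\le z$), and your inductive step recovers exactly the recursion the paper records as Theorem \ref{T:Rrecurrence}, including the genuinely monoid-theoretic case $R_{\phi,\sigma}=qR_{\phi,\sigma'}$ when $s\phi=\phi$. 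Two remarks. First, you were right to insist on proving the lifting step directly from \cite{PPR97} rather than invoking the paper's Corollary \ref{LiftingProperty}: that corollary is deduced from Theorem \ref{T:Rrecurrence} and Proposition \ref{prop:MainPropForPutchaR}, which rest on the very corollary you are proving, so citing it would be circular. Second, the step you defer does close along the lines you indicate: given $s\theta<\theta$, $s\sigma<\sigma$ and $s\theta\le s\sigma$, write $\theta=uev^{-1}$ and $\sigma=aeb^{-1}$ in standard form; by Corollary \ref{coset.descent}(ii) the standard forms of $s\theta$ and $s\sigma$ are $(su)ev^{-1}$ and $(sa)eb^{-1}$, and PPR yields $w\in W(e)$ with $su\le (sa)w=s(aw)$ and $bw\le v$. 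Using the standard monotonicity fact for Bruhat order in $W$ (if $x\le y$ then $\max(x,sx)\le\max(y,sy)$), one gets $u\le aw$ when $s(aw)<aw$, so $\theta\le\sigma$ with the same witness $w$, and $u\le (sa)w$ when $s(aw)>aw$, so $\theta\le s\sigma\le\sigma$; either way the descent case is settled. What remains in your write-up are only cosmetic gaps: the support condition in the other two branches (when $s\theta=\theta$ one has $\theta\le s\sigma<\sigma$; when $s\theta>\theta$, Corollary \ref{coset.descent}(i) forces $su\in D_e$, hence $\theta\le s\theta\le s\sigma<\sigma$), and the identity $\overline{A}_\sigma=\overline{A}_s\,\overline{A}_{\sigma'}$, which you justify by asserting multiplicativity of the extended involution, but which follows already from the displayed formula of Theorem \ref{T:Putcha41}: that formula says precisely $\overline{A}_\sigma=\overline{A}_a\,\overline{A}_{eb^{-1}}$, and $\overline{A}_a=\overline{A}_s\,\overline{A}_{sa}$ holds inside $\mathcal{H}(W)$ since the Kazhdan--Lusztig involution is a ring automorphism there.
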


In Section \ref{S:Rpolynomials}, we  answer  the following question by Putcha:

\begin{prob} [\cite{Putcha97}, Problem 4.3.] \label{P:Problem1} Determine the polynomials $R_{\theta, \sigma}$ explicitly for $\theta,\sigma \in WeW$. Does $\theta \leq \sigma$ imply $R_{\theta,\sigma} \neq 0$?
\end{prob}

\section{Descent sets for elements of the Renner monoid}

An important ingredient in the study of the combinatorics of the Kazhdan-Lusztig theory for Weyl groups is the descent of an element $w\in W$, which has
been missing in the context of Renner monoids.  
In the following, we extend  the notion of the descent set of an element 
$w\in W$ to a $\mathcal{J}$-class (a $W\times W$-orbit) in the Renner monoid.

Note that for a simple reflection $s$ and $\theta\in R$,
\[
s\theta < \theta  \; (resp., = , >) \qquad \text{if and only if} \qquad 
\ell(s\theta) - \ell(\theta) = -1 \; (resp., 0, 1).
\]

The following lemma can be found in  \cite{Putcha01}.

\begin{lem} 
Let  $I\subset S$, $W_I$ the subgroup generated by $I$, and $D_I$ 
the minimal coset representatives of $W/W_I$ in $W$.
Given $x, y \in D_I$ and  $w, u\in  W_I$.
\begin{enumerate}
\item[(i)] If $xw< yu$ then there exist $w_1, w_2 \in W$ satisfying  $w = w_1w_2$ such that $\ell(w)=\ell(w_1)+\ell(w_2)$, $xw_1 \leq y$ and $w_2\leq u$.
\item[(ii)] If  $wx^{-1}< uy^{-1}$ then there exist $w_1, w_2 \in W$ satisfying $w = w_1 w_2$ such  that  $\ell(w)=\ell(w_1)+\ell(w_2)$, $w_1 \leq u$ and $w_2x^{-1} \leq y^{-1}$.
\end{enumerate}
\end{lem}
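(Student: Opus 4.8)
The plan is to deduce both statements from the \emph{subword property} of the Bruhat order together with the uniqueness and length-additivity of the parabolic factorization $W = D_I \cdot W_I$. Recall that for $x \in D_I$ and $w \in W_I$ one has $\ell(xw) = \ell(x) + \ell(w)$, so the concatenation of a reduced word for $x$ with a reduced word for $w$ is a reduced word for $xw$; the same holds for $yu$. I will also use that inversion $g \mapsto g^{-1}$ is a length-preserving anti-automorphism of $(W,\leq)$, which reduces (ii) to (i). It suffices to argue under the weaker hypothesis $xw \leq yu$, since this covers the strict case.

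For (i), I would fix reduced words and form the concatenated word $\mathbf{y}\,\mathbf{u}$ (a reduced word for $y$ followed by one for $u$), which is a reduced word for $yu$. By the subword property, $xw \leq yu$ guarantees a reduced subword of $\mathbf{y}\,\mathbf{u}$ spelling $xw$. Splitting this subword at the junction of the two blocks, the letters taken from $\mathbf{y}$ spell an element $x'$ with $x' \leq y$, and the letters taken from $\mathbf{u}$ spell an element $u' \in W_I$ with $u' \leq u$; because the whole subword is reduced, $xw = x'u'$ and $\ell(x') + \ell(u') = \ell(xw)$. Since $x'$ need not lie in $D_I$, I would take its parabolic factorization $x' = x''v''$ with $x'' \in D_I$, $v'' \in W_I$ and $\ell(x') = \ell(x'') + \ell(v'')$. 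Then $xw = x''(v''u')$ is a factorization into a $D_I$-part times a $W_I$-part with lengths adding, so by uniqueness of the parabolic factorization of $xw$ we must have $x'' = x$ and $v''u' = w$. Setting $w_1 := v''$ and $w_2 := u'$ gives $w = w_1 w_2$ with $\ell(w) = \ell(w_1) + \ell(w_2)$, while $x w_1 = x''v'' = x' \leq y$ and $w_2 = u' \leq u$, which is exactly (i).

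The main obstacle is precisely this last identification: the subword extracted from the $\mathbf{y}$-block carries no a priori control over its coset, so one must re-decompose $x'$ and then invoke the uniqueness of the parabolic factorization of $xw$ to force $x'' = x$. Making the lengths line up, so that uniqueness is applicable, is where the reducedness of the chosen subword is essential.

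Finally, (ii) follows from (i) by inversion. Since $wx^{-1} \leq uy^{-1}$ is equivalent to $x w^{-1} \leq y u^{-1}$, and $w^{-1}, u^{-1} \in W_I$, I apply (i) to the pair $x w^{-1} \leq y u^{-1}$ to obtain a factorization $w^{-1} = a_1 a_2$ with $\ell(w^{-1}) = \ell(a_1) + \ell(a_2)$, $x a_1 \leq y$ and $a_2 \leq u^{-1}$. Putting $w_1 := a_2^{-1}$ and $w_2 := a_1^{-1}$ yields $w = w_1 w_2$ with $\ell(w) = \ell(w_1) + \ell(w_2)$, and inverting the two inequalities gives $w_1 = a_2^{-1} \leq u$ and $w_2 x^{-1} = (x a_1)^{-1} \leq y^{-1}$, as required.
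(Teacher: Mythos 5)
Your proof is correct. One point of comparison worth noting at the outset: the paper does not actually prove this lemma --- it is quoted verbatim from Putcha's \emph{Shellability in reductive monoids} [Putcha01], so your argument supplies a self-contained proof that the paper omits. Your route --- concatenate reduced words for $y$ and $u$ (reduced for $yu$ since $\ell(yu)=\ell(y)+\ell(u)$ for $y\in D_I$, $u\in W_I$), extract a reduced subword spelling $xw$ via the subword property, split it at the block junction into reduced segments spelling $x'\leq y$ and $u'\leq u$ with $u'\in W_I$, and then force the identification $x''=x$, $v''u'=w$ through the parabolic factorization of $x'$ --- is sound at every step: consecutive segments of a reduced word are reduced, so the length bookkeeping $\ell(xw)=\ell(x')+\ell(u')=\ell(x)+\ell(v'')+\ell(u')$ does yield $\ell(w)=\ell(w_1)+\ell(w_2)$ after cancelling $\ell(x)$ against $\ell(xw)=\ell(x)+\ell(w)$. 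One small over-caution: the uniqueness of the factorization $W=D_I\cdot W_I$ is set-theoretic (the map $D_I\times W_I\to W$ is a bijection), so $x''=x$ and $v''u'=w$ follow without needing the lengths to ``line up''; the length additivity is needed only for the conclusion $\ell(w)=\ell(w_1)+\ell(w_2)$, which you establish correctly anyway. Your reduction of (ii) to (i) by the anti-automorphism $g\mapsto g^{-1}$, with the relabeling $w_1:=a_2^{-1}$, $w_2:=a_1^{-1}$, is exactly right, and proving (i) under the weaker hypothesis $xw\leq yu$ is harmless. As a bonus, your construction gives $w_1,w_2\in W_I$, slightly stronger than the stated $w_1,w_2\in W$.
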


\begin{cor}\label{coset.descent}
 We use the notation of the previous Lemma. Let $x\in D_I$ and let $s\in S$.
\begin{enumerate}
\item[(i)] If $x<sx$ then either $sx \in D_I$ or  $sx=xs'$ for some $s' \in W_I\cap S$. 
\item[(ii)] If $sx<x$ then $sx \in D_I$.
\end{enumerate}
\end{cor}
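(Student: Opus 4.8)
The plan is to reduce both parts to the standard characterization that for $z \in W$ one has $z \in D_I$ if and only if $z$ has no right descent in $I$, i.e. $\ell(zs') > \ell(z)$ for every $s' \in I$; equivalently $\ell(zw) = \ell(z)+\ell(w)$ for all $w \in W_I$. I write $n = \ell(x)$ throughout, and I record at the outset the consequence of $x \in D_I$ that I will use repeatedly: $\ell(xs') = n+1$ for every $s' \in I$.

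I would dispose of part (ii) first, as it needs only a length-parity argument. Assume $sx < x$, so $\ell(sx) = n-1$, and suppose toward a contradiction that $sx \notin D_I$. Then there is $s' \in I$ with $(sx)s' < sx$, whence $\ell(sxs') = \ell(sx) - 1 = n-2$. On the other hand $sxs' = s\cdot(xs')$ with $\ell(xs') = n+1$, and since left multiplication by a simple reflection changes length by exactly one, $\ell(sxs') \ge \ell(xs') - 1 = n$. This contradicts $\ell(sxs') = n-2$, so no such $s'$ exists and $sx \in D_I$.

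For part (i), assume $x < sx$, so $\ell(sx) = n+1$, and assume $sx \notin D_I$ (otherwise the first alternative holds and there is nothing to prove). Then $sx$ has a right descent $s' \in I$, so $\ell(sxs') = n$. I would fix a reduced word $x = s_1\cdots s_n$; since $\ell(sx) = n+1$ the expression $(s, s_1, \ldots, s_n)$ is reduced for $sx$. Because $s'$ is a right descent of $sx$, the exchange property yields one letter of this word whose deletion equals $sxs'$. The crux is to show the deleted letter is the leading $s$: were some $s_i$ with $i \ge 1$ deleted instead, then left-multiplying the resulting expression by $s$ would exhibit $xs' = s_1\cdots\widehat{s_i}\cdots s_n$ as a word of length $n-1$, forcing $\ell(xs') \le n-1 < n+1$ and contradicting $x \in D_I$. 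Hence the leading $s$ is deleted, so $sxs' = s_1\cdots s_n = x$; multiplying on the left by $s$ gives $sx = xs'$ with $s' \in I \subseteq W_I \cap S$, the second alternative.

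The main obstacle is precisely this deletion step in part (i): one must exclude the possibility that $sxs'$ is some element of length $n$ other than $x$, and the clean way to do so is to couple the exchange property with the defining inequality $\ell(xs') > \ell(x)$ for minimal coset representatives, which rules out deleting any letter but the first. By contrast, part (ii) is routine once one notices that the hypothetical value $\ell(sxs') = n-2$ is incompatible with the length of $s\cdot(xs')$.
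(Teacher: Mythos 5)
Your proof is correct, but it follows a genuinely different route from the paper's. For part (i) the paper writes $sx$ in its coset normal form $x's'$ with $x'\in D_I$, $s'\in W_I$, applies the preceding Lemma to the inequality $x\cdot\mathrm{id} \leq x's'$ to get $x\leq x'$, and then a length count $\ell(x)+1=\ell(x')+\ell(s')$ forces either $s'=\mathrm{id}$ (so $sx=x'\in D_I$) or $x=x'$ and $\ell(s')=1$ (so $sx=xs'$ with $s'\in W_I\cap S$); for part (ii) the paper simply cites the Bj\"orner--Wachs theorem that $D_I$, as a generalized quotient, is a lower interval in the weak order, so a weak-order predecessor $sx$ of $x\in D_I$ stays in $D_I$. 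You instead work entirely from the descent characterization of minimal coset representatives ($z\in D_I$ iff $\ell(zs')>\ell(z)$ for all $s'\in I$, and $\ell(xw)=\ell(x)+\ell(w)$ for $w\in W_I$): your part (ii) is a two-line parity bound $\ell(s(xs'))\geq \ell(xs')-1=n$ against the hypothetical $n-2$, and your part (i) couples the exchange condition applied to the reduced word $s\,s_1\cdots s_n$ of $sx$ with the inequality $\ell(xs')>\ell(x)$ to pin down that the deleted letter must be the leading $s$, yielding $sx=xs'$ --- this is in essence the classical proof of Deodhar's lemma, of which the statement is an instance. What each approach buys: yours is self-contained and more elementary, needing neither the preceding Lemma nor the Bj\"orner--Wachs generalized-quotient theorem (whose invocation in the paper is restricted to finite $W$, though that suffices here); the paper's argument is shorter given the machinery it already has on hand, and its use of the coset decomposition makes the dichotomy $x=x'$ versus $s'=\mathrm{id}$ transparent without any bookkeeping of reduced words. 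All steps in your argument check out, including the endgame $sxs'=x \Rightarrow sx=xs'$ and the inclusion $I\subseteq W_I\cap S$.
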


\begin{proof} 

(i) Suppose $sx=x's'$ for some $x' \in D_I$ and $s'\in W_I$. Let $\mathrm{id}$ denote the identity element of the Weyl group. We have 
$$ x \cdot \mathrm{id}=x (\mathrm{id}\cdot \mathrm{id}) \leq x' s'
$$ 
and by previous Lemma it follows that $x \leq x'$ and therefore $l(x)\leq l(x')$. On the other hand  since $$l(x)+1=l(sx)=l(x's')=l(x')+l(s')$$ we have either $x=x'$ and $s' \in I$ or $s'=\mathrm{id}$ and $x'=sx$.

(ii)  If $W$ is finite and  $W_{I}\subseteq W$ then Bj\"{o}rner and Wachs shows in \cite[Theorem 4.1]{BjWs88} that $D_{I}$, which is a generalized quotient, is a lower interval of the weak Bruhat order of $W$. Therefore the result follows.
 \end{proof}

\noindent
For $\sigma \in WeW$, define the left descent set and the right descent of $\sigma$ with respect to $S$ as
\begin{equation*}
\Des_L(\sigma)=\{s\in S ~\mid~ \ell(s \sigma)<\ell(\sigma) \} ~\text{ and }~
\Des_R(\sigma)=\{s \in S ~\mid~ \ell( \sigma s)<\ell(\sigma) \}.
\end{equation*}

\noindent
Then, by the Corollary \ref{coset.descent} we reformulate $\Des_L(\sigma)$ and $\Des_R(\sigma)$ as follows:
\begin{lem}\label{defn.coset.descent}
 Suppose $\sigma \in WeW$ has the standard form $xey^{-1}$ where $x\in D_e$ and $y \in D(e)$. Then $\Des_L(\sigma)=\{ s\in S ~|~ \ell(sx)< \ell(x) \}$, and $ \Des_R(\sigma) =\{ s\in S ~|~\ell(sy)>\ell(y),\ \text{and either}~ sy\in D(e), \text{or}~ sy=ys' ~\text{for some } s'  \in W(e)\cap S ~\text{and}~ \ell(xs')<l(x) \}.$
\end{lem}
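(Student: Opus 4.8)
The plan is to compute the standard form of $s\sigma$ and of $\sigma s$ for each $s\in S$ and then read off the length via the formula $\ell(\tau)=\ell(x')+\ell(e)-\ell(y')$, valid whenever $\tau=x'e y'^{-1}$ is in standard form with $x'\in D_e$, $y'\in D(e)$. Throughout I use that $W_e$ and $W(e)$ are the standard parabolics generated by $I_e:=S\cap W_e$ and $I_e^*:=S\cap W(e)$, so that $x\in D_e$ is characterized by $\ell(xt)>\ell(x)$ for all $t\in I_e$, and similarly for $D(e)$.

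For the left descent set, observe $s\sigma=(sx)ey^{-1}$. If $\ell(sx)<\ell(x)$ then Corollary~\ref{coset.descent}(ii) gives $sx\in D_e$, so this is already the standard form and $\ell(s\sigma)=\ell(sx)+\ell(e)-\ell(y)=\ell(\sigma)-1$. If $\ell(sx)>\ell(x)$, Corollary~\ref{coset.descent}(i) yields two subcases: either $sx\in D_e$, whence $\ell(s\sigma)=\ell(\sigma)+1$; or $sx=xs'$ with $s'\in W_e\cap S$, in which case $s'e=e$ forces $s\sigma=xs'ey^{-1}=xey^{-1}=\sigma$. In every case $\ell(s\sigma)<\ell(\sigma)$ holds exactly when $\ell(sx)<\ell(x)$, which is the asserted description of $\Des_L(\sigma)$.

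For the right descent set I write $\sigma s=xe(sy)^{-1}$ and split on $\ell(sy)$ versus $\ell(y)$. If $\ell(sy)<\ell(y)$, then $sy\in D(e)$ by Corollary~\ref{coset.descent}(ii), the expression is standard, and $\ell(\sigma s)=\ell(\sigma)+1$, so $s\notin\Des_R(\sigma)$; this matches the requirement $\ell(sy)>\ell(y)$ in the statement. When $\ell(sy)>\ell(y)$, Corollary~\ref{coset.descent}(i) gives either $sy\in D(e)$ — then the expression is standard, $\ell(\sigma s)=\ell(\sigma)-1$, and $s\in\Des_R(\sigma)$, accounting for the first alternative — or $sy=ys'$ for a unique $s'\in W(e)\cap S$. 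In the latter case, since $es'=s'e$ I rewrite $\sigma s=xes'y^{-1}=(xs')ey^{-1}$; factoring $xs'=\tilde x u$ with $\tilde x\in D_e$, $u\in W_e$ and absorbing $ue=e$, the standard form becomes $\tilde x e y^{-1}$, so $\ell(\sigma s)-\ell(\sigma)=\ell(\tilde x)-\ell(x)$. If $s'\in W_e$ then $s'e=e$ gives $\sigma s=\sigma$ while $\ell(xs')>\ell(x)$, consistent with $s\notin\Des_R(\sigma)$; otherwise I must compare $\ell(\tilde x)$ with $\ell(x)$.

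The crux, which I expect to be the main obstacle, is the following claim: if $x\in D_e$, $s'\in(W(e)\cap S)\setminus W_e$ and $\ell(xs')>\ell(x)$, then already $xs'\in D_e$. Granting it, $\ell(xs')>\ell(x)$ forces $\tilde x=xs'$ and $\ell(\sigma s)=\ell(\sigma)+1$, whereas $\ell(xs')<\ell(x)$ gives $\ell(\tilde x)\le\ell(xs')=\ell(x)-1$ and hence $\ell(\sigma s)<\ell(\sigma)$; thus $s\in\Des_R(\sigma)$ iff $\ell(xs')<\ell(x)$, which is exactly the second alternative. To prove the claim I argue by contradiction: if $xs'\notin D_e$ there is $t\in I_e$ with $\ell(xs't)<\ell(xs')$. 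Since $x\in D_e$ gives $\ell(xt)>\ell(x)$ and by hypothesis $\ell(xs')>\ell(x)$, the element $x$ is the minimal-length representative of the coset $x\langle s',t\rangle$; as $s'\neq t$ (because $s'\notin W_e\ni t$), lengths add across this dihedral coset and $\ell(xs't)=\ell(x)+\ell(s't)=\ell(x)+2>\ell(xs')$, a contradiction. This settles the claim and completes the determination of $\Des_R(\sigma)$.
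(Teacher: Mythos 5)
Your proof is correct and follows exactly the route the paper intends: the paper offers no written proof, merely asserting the lemma follows from Corollary \ref{coset.descent}, and your argument is precisely that derivation---computing the standard forms of $s\sigma$ and $\sigma s$ and reading off lengths via $\ell(x')+\ell(e)-\ell(y')$. Your dihedral-coset claim (that $x\in D_e$, $s'\in (W(e)\cap S)\setminus W_e$ and $\ell(xs')>\ell(x)$ force $xs'\in D_e$, via additivity of length over the minimal representative of $x\langle s',t\rangle$) correctly fills the one nontrivial step the paper leaves implicit.
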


\begin{rem} Let $\nu \in WeW$ be the unique element satisfying $\ell(\nu)=0$. Then, it is easy to see that both descent sets of $\nu$ are {\em empty}.  It is essential to emphasize that unlike the usual Weyl group setting,  not every $\sigma \in WeW$ has a left or right descent.  
On the other hand, by using \cite[Theorem 4.1]{BjWs88}, one can show 
the following. 
\end{rem}

\begin{cor} 
For $\sigma \in WeW$ such that $\ell(\sigma)\not=0$ we have 
$\Des_L(\sigma) \cup \Des_R(\sigma)\not=\emptyset$.
\end{cor}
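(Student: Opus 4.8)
The plan is to argue by cases according to whether $\sigma$ has a left descent, and then to reduce the remaining case to the weak-order structure of $D(e)$. Write $\sigma = xey^{-1}$ in standard form with $x \in D_e$ and $y \in D(e)$. If $\Des_L(\sigma) \neq \emptyset$ there is nothing to prove, so by Lemma~\ref{defn.coset.descent} I may assume $\{s \in S : \ell(sx) < \ell(x)\} = \emptyset$; that is, $x$ has empty left descent set as an element of $W$. Since the identity is the only element of a finite Coxeter group with no left descent, this forces $x = \mathrm{id}$, and hence $\sigma = ey^{-1}$ with $\ell(\sigma) = \ell(e) - \ell(y)$.

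Next I would exploit the hypothesis $\ell(\sigma) \neq 0$. Because $y \in D(e)$ we always have $\ell(y) \leq \ell(w_0 v_0) = \ell(e)$, so $\ell(\sigma) = \ell(e) - \ell(y) \geq 0$; the hypothesis therefore gives $\ell(\sigma) > 0$, equivalently $\ell(y) < \ell(e)$, so $y$ is not the longest element $w_0 v_0$ of $D(e)$. It then remains to produce a right descent. Specializing the description in Lemma~\ref{defn.coset.descent} to $x = \mathrm{id}$, the second alternative there requires $\ell(xs') < \ell(x)$, which is impossible when $x = \mathrm{id}$; hence
\[
\Des_R(\sigma) = \{\, s \in S : \ell(sy) > \ell(y) \text{ and } sy \in D(e) \,\}.
\]
So I must find a simple reflection $s$ with $sy \in D(e)$ and $\ell(sy) = \ell(y) + 1$.

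Finally I would invoke the Bj\"orner--Wachs description already used in the proof of Corollary~\ref{coset.descent} \cite{BjWs88}: the generalized quotient $D(e)$ is the lower interval $[\mathrm{id}, w_0 v_0]$ of the (left) weak Bruhat order on $W$, whose covers are given by left multiplication by simple reflections. Since $y \in D(e)$ with $y \neq w_0 v_0$, there is a saturated chain from $y$ up to $w_0 v_0$ inside this interval, and its first step is a cover $y \lessdot sy$ with $s \in S$, $sy \in D(e)$, and $\ell(sy) = \ell(y) + 1$. Such an $s$ lies in $\Des_R(\sigma)$, whence $\Des_R(\sigma) \neq \emptyset$ and the corollary follows. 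I expect the only delicate point to be fixing the conventions so that the weak order realizing $D(e)$ as a lower interval is the \emph{left} weak order, matching the left-multiplication form $sy$ appearing in the right descent condition of Lemma~\ref{defn.coset.descent}; once this alignment is made explicit, the existence of an upward cover strictly below the top element is immediate from the gradedness of the weak order.
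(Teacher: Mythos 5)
Your proof is correct and follows essentially the same route the paper intends: the paper gives no written proof, merely remarking that the corollary follows ``by using [Theorem 4.1, BjWs88],'' and your argument is exactly the fleshed-out version of that hint --- reduce to the case $x=\mathrm{id}$ (no left descent forces $x$ trivial), note $\ell(\sigma)\neq 0$ then forces $y\neq w_0v_0$, and use the Bj\"orner--Wachs realization of the generalized quotient $D(e)$ as the lower interval $[\mathrm{id},w_0v_0]$ in left weak order (the same fact the paper already invokes in proving Corollary \ref{coset.descent}(ii)) to produce $s\in S$ with $sy\in D(e)$ and $\ell(sy)=\ell(y)+1$, hence $s\in\Des_R(\sigma)$. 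Your care about matching the left weak order convention to the left-multiplication form of the right-descent condition in Lemma \ref{defn.coset.descent} is exactly the right point to check, and it goes through.
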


The following example illustrates the possible cases for the descent sets of $\sigma \in WeW$ for $W=S_n$.    

\begin{example} 
Let $\mathbf{M}_4(\F_q)$ be the finite monoid  of $4\times 4$ matrices over the finite field $\F_q$ with $q$ elements. The Renner monoid of $\mathbf{M}_4(\F_q)$ consists of all $4\times 4$ partial permutation matrices\footnote{A partial permutation matrix is a $0-1$ matrix with at most one 1 in each row and each column.}, and its Weyl subgroup is the symmetric group $W=S_{4}$ consisting of permutation matrices. Given a matrix $x=(x_{ij})$ in the Renner monoid, let $(a_1 a_2 a_{3} a_4)$ be the sequence defined by
\begin{equation}\label{E:oneline}
a_j =
\begin{cases}
0,  &\text{if the $j$th column consists of zeros;}\\
i,   &\text{if $x_{ij}=1$.}
\end{cases}
\end{equation}
For example, the sequence associated with the matrix
\begin{equation*}
\begin{pmatrix}
0 & 0 & 0 & 0 \\
0 & 0 & 0 & 0 \\
1 & 0 & 0 & 0 \\
0 & 0 & 1 & 0
\end{pmatrix}
\end{equation*}
is $(3040)$. Let $e$ be the idempotent $e=(1200)\in WeW$ . Then,  $W(e) \cong S_2\times S_2$. The table below illustrates the possible cases for the descent sets for some $\sigma\in WeW$ 
$$\begin{array}{ccc} \underline{\sigma}& \underline{\Des_L}& \underline{\Des_R}\\
\\
(1234)e(3412)=(0012) & \emptyset&\emptyset \\
(1324)e(3412)=(0013) & \{s_2\}&\emptyset \\
(1234)e(1342)=(1002) &\emptyset&\{s_1\} \\
(3214)e(1342)=(3002) &\{s_1,s_2\}&\{s_1\}\\
(4213)e(3124)=(0420) &\{s_1,s_3\}&\{s_2,s_3\}
\end{array}
$$   
\end{example}
Here, $s_{1}=(2134),\ s_{2}=(1324),\ s_{3}=(1243)$ are the simple reflections for $S_{4}$.

\section{$R$-polynomials}\label{S:Rpolynomials}

Given an interval $[\theta,\sigma]\subseteq WeW$ define its length 
$\ell(\theta,\sigma) := \ell(\sigma) - \ell(\theta)$ and its $R$-polynomial
$R_{\theta,\sigma}(q)$ as in Corollary \ref{C:Putcha41}.

\begin{thm}\label{T:Rrecurrence}
Let $\sigma, \theta \in R$ be such that $\ell(\sigma)\not=0$ and  
$\theta \leq \sigma$. Then for $s\in \Des_L(\sigma)$, one has  
\[
R_{\theta,\sigma} = 
\begin{cases}
R_{s\theta, s\sigma}\ &\text{if}\, s\theta < \theta,\\
qR_{\theta,s \sigma}\ &\text{if}\, s\theta=\theta,\\
(q-1)R_{\theta, s \sigma} + q R_{s\theta,s\sigma} \ &\text{if}\, s\theta>\theta.
\end{cases}
\]
Otherwise there exists $s\in \Des_R(\sigma)$ and 
\[
R_{\theta,\sigma} = 
\begin{cases}
R_{\theta s, \sigma s}\ &\text{if}\, \theta  s< \theta,\\
q R_{\theta, \sigma s}\ &\text{if}\, \theta s=\theta,\\
(q-1)R_{\theta, \sigma s} + q R_{\theta s, \sigma s} \ &\text{if}\, \theta s>\theta.
\end{cases}
\]
An addition the above, if $s\theta>\theta$ and $s\sigma=\sigma$, then
$R_{\theta, \sigma} = qR_{s\theta, \sigma}$.
\end{thm}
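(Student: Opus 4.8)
The plan is to deduce every identity in the statement from two facts. The first is structural: the extension of the involution in Theorem \ref{T:Putcha41} is multiplicative on the augmented algebra $\widehat{\mathcal{H}}(e)$, so that $\overline{A_s x}=\overline{A}_s\,\overline{x}$ and $\overline{x A_s}=\overline{x}\,\overline{A}_s$ for all $s\in S$ and $x\in\mathcal{H}(e)$; this is exactly how $\overline{A}_\sigma$ is built out of $\overline{A}_s$ and $\overline{A}_e$ in Theorem \ref{T:Putcha41}. The second is the quadratic relation read off from (\ref{hecke.structure}), namely $A_s^2=q^{-1}A_{\mathrm{id}}+(1-q^{-1})A_s$, which yields $\overline{A}_s=A_s^{-1}=qA_s-(q-1)A_{\mathrm{id}}$. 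Throughout I will use that $A_{\mathrm{id}}$ acts as the identity on $\mathcal{H}(e)$ and that left (resp.\ right) multiplication by the unit $s$ permutes $WeW$.

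For the left-descent case fix $s\in\Des_L(\sigma)$. Then $s\sigma<\sigma$, so by Corollary \ref{coset.descent} and Lemma \ref{defn.coset.descent} the element $s\sigma$ again lies in $WeW$ with $\ell(s\sigma)=\ell(\sigma)-1$, and (\ref{hecke.structure}) gives $A_\sigma=A_sA_{s\sigma}$. Applying the involution and expanding $\overline{A}_{s\sigma}$ by Corollary \ref{C:Putcha41}(i) gives
\[
\overline{A}_\sigma=\bigl(qA_s-(q-1)A_{\mathrm{id}}\bigr)\,q^{\ell(\sigma)-1-\ell(e)}\sum_{\mu\in WeW}\overline{R}_{\mu,s\sigma}\,A_\mu.
\]
I would then expand each product $A_sA_\mu$ by the three cases of (\ref{hecke.structure}) and compare the coefficient of a fixed $A_\theta$ with the one coming from $\overline{A}_\sigma=q^{\ell(\sigma)-\ell(e)}\sum_\theta\overline{R}_{\theta,\sigma}A_\theta$. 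Because $s$ is a unit, the only indices $\mu$ that feed $A_\theta$ are $\mu=\theta$ and $\mu=s\theta$, and the $-(q-1)A_{\mathrm{id}}$ term is designed to cancel the $(1-q^{-1})$-part of $A_sA_\theta$. Separating the three subcases $s\theta<\theta$, $s\theta=\theta$, $s\theta>\theta$ and then applying the involution to the resulting scalar identities (using $\overline{q}=q^{-1}$ and $\overline{\overline{x}}=x$) produces the three displayed formulas.

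The right-descent case is the exact mirror: if $\Des_L(\sigma)=\emptyset$ then, since $\ell(\sigma)\neq0$, the corollary of the preceding section guaranteeing $\Des_L(\sigma)\cup\Des_R(\sigma)\neq\emptyset$ forces $\Des_R(\sigma)\neq\emptyset$, and for $s\in\Des_R(\sigma)$ one writes $A_\sigma=A_{\sigma s}A_s$, applies the involution to obtain $\overline{A}_\sigma=\overline{A}_{\sigma s}\,\overline{A}_s$, and runs the same coefficient comparison with the right-handed rules for $A_\theta A_s$. The final assertion follows from the same mechanism but from a different starting relation: when $s\sigma=\sigma$ one has $\ell(s\sigma)=\ell(\sigma)$, so (\ref{hecke.structure}) gives $A_sA_\sigma=A_\sigma$, whence $\overline{A}_s\,\overline{A}_\sigma=\overline{A}_\sigma$, i.e.\ $A_s^{-1}\overline{A}_\sigma=\overline{A}_\sigma$, i.e.\ $\overline{A}_\sigma=A_s\overline{A}_\sigma$. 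Reading off the coefficient of $A_\theta$ in the case $s\theta>\theta$ leaves only the $\mu=s\theta$ contribution $q^{-1}\overline{R}_{s\theta,\sigma}$, so $\overline{R}_{\theta,\sigma}=q^{-1}\overline{R}_{s\theta,\sigma}$ and hence $R_{\theta,\sigma}=qR_{s\theta,\sigma}$.

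The computation itself is the familiar Kazhdan--Lusztig bookkeeping, so I expect the main obstacles to be conceptual rather than arithmetic. The first is justifying the multiplicativity $\overline{A_sx}=\overline{A}_s\overline{x}$ on $\widehat{\mathcal{H}}(e)$ cleanly from Putcha's construction, since the whole reduction rests on it. The second, and genuinely new compared with the Weyl-group theory, is the middle case $s\theta=\theta$: in a Weyl group a simple reflection never fixes an element, but here $s\theta=\theta$ occurs precisely when $sx=xs'$ with $s'\in W(e)\cap S$ in the standard form $\theta=xey^{-1}$. The point to verify carefully is that, $s$ being a unit of $R$, the equation $s\mu=\theta$ forces $\mu=s\theta=\theta$, so that a single term survives in the coefficient extraction and yields the clean factor $q$; together with the standard-form control of Corollary \ref{coset.descent} and Lemma \ref{defn.coset.descent}, this also guarantees that every coefficient extracted is again an $R$-polynomial of the same orbit $WeW$.
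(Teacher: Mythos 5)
Your proof is correct, and in fact the paper states Theorem \ref{T:Rrecurrence} \emph{without} proof, so your derivation---writing $A_\sigma=A_sA_{s\sigma}$ for $s\in\Des_L(\sigma)$, applying the bar involution with $\overline{A}_s=A_s^{-1}=qA_s-(q-1)A_{\mathrm{id}}$, expanding via Corollary \ref{C:Putcha41}(i), and extracting the coefficient of $A_\theta$ (only $\mu=\theta$ and $\mu=s\theta$ contribute, since $s$ is a unit)---supplies exactly the standard Kazhdan--Lusztig-style argument the authors implicitly rely on; I have checked that all three left cases, the mirrored right cases, and the final identity $R_{\theta,\sigma}=qR_{s\theta,\sigma}$ from $A_sA_\sigma=A_\sigma$ come out with the stated coefficients in the paper's $A$-normalization. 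The two points you flag are not gaps: multiplicativity of the bar map is part of Putcha's Theorem 4.1, which extends the involution as a (semilinear) \emph{ring} involution of $\widehat{\mathcal{H}}(e)$, with $\mathcal{H}(e)$ an ideal so that $A_s x$ and $xA_s$ stay in $\widehat{\mathcal{H}}(e)$; and the trichotomy $s\theta<\theta$, $s\theta=\theta$, $s\theta>\theta$ corresponding to $\ell(s\theta)-\ell(\theta)=-1,0,1$ (which guarantees the case analysis is exhaustive and that $\ell(s\theta)=\ell(\theta)$ forces $s\theta=\theta$) is exactly the remark opening the paper's Section~4, backed by Corollary \ref{coset.descent} and Lemma \ref{defn.coset.descent}.
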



When $\theta=\sigma$, $R_{\theta,\sigma}(q)=1$. 
If $[\theta,\sigma]$ is an interval 
of length $1$ in $WeW$, then $R_{\theta,\sigma}(q)=q-1$.

\begin{rem} Given two elements $u,v$ of a Weyl group $W$, the polynomial $R_{u,v}(q)\neq 0$ 
if and only if $u\leq v$. If $u\leq v$, then $R_{u,v}(q)$ is a monic polynomial of
degree $\ell(u,v)$ whose constant term is $(-1)^{\ell(u,v)}$. 
\end{rem}
\noindent For the orbit Hecke algebras, we have the following  which 
answers Problem \ref{P:Problem1} \cite{Putcha97}:

\begin{prop} \label{prop:MainPropForPutchaR} 
Let $\theta \leq \sigma \in WeW$. Then $R_{\theta,\sigma}$ is a monic polynomial of degree $\ell(\theta,\sigma)=\ell(\sigma)-\ell(\theta)$ whose the constant term is either 0 or
$(-1)^{\ell(\theta, \sigma)}$.
In particular, $$R_{\theta,\sigma} \neq 0 \qquad \text{ if and only if } \qquad \theta \leq \sigma.$$
\end{prop}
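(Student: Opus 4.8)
The plan is to prove Proposition \ref{prop:MainPropForPutchaR} by induction on $\ell(\sigma)$, using the recurrence in Theorem \ref{T:Rrecurrence} as the engine. The base case is handled by the observations immediately following the theorem: when $\theta=\sigma$ the polynomial is $1$ (a monic degree-$0$ polynomial with constant term $(-1)^0=1$), and when $\ell(\theta,\sigma)=1$ it equals $q-1$ (monic of degree $1$ with constant term $-1=(-1)^1$). For the inductive step, since $\ell(\sigma)\neq 0$, the Corollary guarantees $\Des_L(\sigma)\cup\Des_R(\sigma)\neq\emptyset$, so we may fix a simple reflection $s$ in one of the descent sets and apply the corresponding branch of the recurrence. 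By symmetry between the left and right cases it suffices to treat, say, $s\in\Des_L(\sigma)$; the element $s\sigma$ then satisfies $\ell(s\sigma)=\ell(\sigma)-1<\ell(\sigma)$, so the inductive hypothesis applies to every $R$-polynomial appearing on the right-hand side.

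The core of the argument is a case analysis tracking how each branch transforms the pair (degree, constant term). First I would verify the degree claim. In the case $s\theta<\theta$ we have $R_{\theta,\sigma}=R_{s\theta,s\sigma}$, and since $\ell(s\theta,s\sigma)=(\ell(\sigma)-1)-(\ell(\theta)-1)=\ell(\theta,\sigma)$, monicity and degree are inherited directly, and the constant term is unchanged. In the case $s\theta=\theta$ we have $R_{\theta,\sigma}=qR_{\theta,s\sigma}$: multiplication by $q$ raises the degree by one (matching $\ell(\theta,s\sigma)+1=\ell(\theta,\sigma)$, since here $\ell(s\sigma)=\ell(\sigma)-1$ while $\ell(\theta)$ is unchanged), preserves monicity, and forces the constant term to be $0$ — this is precisely where the ``$0$'' alternative in the statement originates. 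In the case $s\theta>\theta$ we have $R_{\theta,\sigma}=(q-1)R_{\theta,s\sigma}+qR_{s\theta,s\sigma}$; here both summands, by the inductive hypothesis, have the right degrees so that $(q-1)R_{\theta,s\sigma}$ has degree $\ell(\theta,s\sigma)+1=\ell(\theta,\sigma)$ and $qR_{s\theta,s\sigma}$ has degree $\ell(s\theta,s\sigma)+1=\ell(\theta,\sigma)-1+1$, and the top-degree term comes only from $(q-1)R_{\theta,s\sigma}$, giving monicity of the correct degree. The constant term is $(-1)\cdot(\text{const term of }R_{\theta,s\sigma})$, which by induction is either $0$ or $(-1)^{\ell(\theta,s\sigma)+1}=(-1)^{\ell(\theta,\sigma)}$, exactly as required.

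For the final ``in particular'' statement, namely $R_{\theta,\sigma}\neq 0\Leftrightarrow\theta\leq\sigma$, the backward implication that $R_{\theta,\sigma}\neq 0$ forces $\theta\leq\sigma$ is already contained in Corollary \ref{C:Putcha41}(ii). The forward direction follows from the monicity we have just established: a monic polynomial is never the zero polynomial, so whenever $\theta\leq\sigma$ the recurrence produces a nonzero $R_{\theta,\sigma}$. The step I expect to be the main obstacle is the bookkeeping in the mixed branch $s\theta>\theta$: one must confirm that no cancellation destroys the leading coefficient, which requires knowing that $R_{\theta,s\sigma}$ genuinely attains degree $\ell(\theta,s\sigma)$ (not lower) so that its product with $q$ dominates $qR_{s\theta,s\sigma}$. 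This is guaranteed by the monicity half of the inductive hypothesis, so the induction must carry the full ``monic of degree $\ell(\theta,\sigma)$'' statement — not merely nonvanishing — as its invariant. I would also double-check that the extra clause of Theorem \ref{T:Rrecurrence} (the case $s\theta>\theta$ with $s\sigma=\sigma$, giving $R_{\theta,\sigma}=qR_{s\theta,\sigma}$) is consistent with the same degree-and-constant-term transformation, so that every applicable branch respects the invariant.
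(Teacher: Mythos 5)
Your proposal is correct and takes essentially the same route as the paper: induction on $\ell(\sigma)$ driven by the recurrence of Theorem \ref{T:Rrecurrence}, with a case analysis on how a descent $s$ of $\sigma$ acts on $\theta$ (the paper writes out only the constant-term statement, sidestepping the branch $s\theta=\theta$ by assuming the constant term nonzero, and declares the degree/monicity statement ``proved similarly,'' so your parallel bookkeeping of both invariants is exactly the intended argument). One arithmetic slip: in the mixed branch one has $\ell(s\theta,s\sigma)=\ell(\theta,\sigma)-2$, so $qR_{s\theta,s\sigma}$ has degree $\ell(\theta,\sigma)-1$, not $\ell(\theta,\sigma)$ as your displayed computation suggests --- and it is precisely this correct count that justifies your (correct) conclusion that the leading term comes solely from $(q-1)R_{\theta,s\sigma}$.
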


\begin{proof} 
We prove the statement about the constant term. The statement about the degree
and the leading term is proved similarly via induction on $\ell(\sigma)$.

Clearly, the constant term statement holds
 if $\ell(\sigma) \leq 1$. 
As the induction hypothesis, 
we assume that for all pairs $\rho \leq \tau$ in $WeW$ with $\ell(\tau) <\ell(\sigma) $, the constant term of $R_{\rho,\tau}(q)$ is either 0 or $(-1)^{\ell(\rho, \tau)}$. 

Let $\theta \in WeW$ be such that $\theta \leq \sigma$. If $R_{\theta,\sigma}(0)=0$, there is nothing to prove.

Assume that the constant term of $R_{\theta,\sigma}$ is non-zero. Without loss of generality we assume that there exists $s\in \Des_L(\sigma)$. Then, by Theorem \ref{T:Rrecurrence}, we must have either $s\theta > \theta$ or $s \theta < \theta$. 

First suppose that $s\theta < \theta$. 
Then $R_{\theta, \sigma} (q) = R_{s \theta, s\sigma}(q)$. 
Since $\ell(s\theta,s\sigma)=\ell(\theta,\sigma)$, $R_{\theta,\sigma}(0)$ equals $(-1)^{\ell(\theta,\sigma)}$.

Now suppose that $s\theta > \theta$. Therefore, 
$R_{\theta,\sigma}=(q-1)R_{\theta, s \sigma} + q R_{s\theta,s\sigma}$. 
Note that $\theta \leq s\sigma$. 
Consequently,
$R_{\theta,\sigma} (0)=-R_{\theta,s\sigma}(0) $. 
Hence the latter is nonzero,
and by the induction hypothesis it equals $(-1)^{\ell(\theta,s\sigma)}$.
 Therefore, $R_{\theta,\sigma} (0) = (-1)^{\ell(\theta,\sigma)}$ as claimed. 
\end{proof}

\begin{rem}
A very similar line of argument shows that if $R_{\theta, \sigma}(q)$ has a non-zero constant term, then 
$\overline{R_{\theta,\sigma}} = \varepsilon_{\theta} \varepsilon_{\sigma} q_{\theta} q_{\sigma}^{-1} R_{\theta, \sigma}(q)$. However, this equality is false if $R_{\theta, \sigma} (0) = 0$.  
\end{rem}


The lifting property for  Weyl group $W$ states that given $u<v$ in $W$ and 
a simple reflection $s$, if $u>su$ and $v<sv$, then $u<sv$ and $su<v$. We will use 
the lifting property for  $W$ to prove the lifting property for the orbits $WeW$.


\begin{cor}[Lifting Property for $WeW$] \label{LiftingProperty}
Let  $\theta = u e v^{-1}$ and $\sigma = x e y^{-1} $ be in standard form, $\theta < \sigma$ and $s$ be a simple reflection. 

\begin{itemize}
\item[(a)]
If $\theta < s \theta$ and $\sigma < s \sigma$, then $s \theta < s \sigma$.
\item[(b)] 
If $s \theta \geq \theta$ and
$s\sigma \leq \sigma$, then $\theta \leq s \sigma$ and $s \theta \leq \sigma$.
\end{itemize}

\end{cor}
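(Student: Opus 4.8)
The plan is to translate every Bruhat comparison in $WeW$ into a nonvanishing statement about $R$-polynomials, using Proposition~\ref{prop:MainPropForPutchaR}: for $\alpha,\beta\in WeW$ one has $\alpha\le\beta$ if and only if $R_{\alpha,\beta}\neq0$, and when $\alpha\le\beta$ the polynomial $R_{\alpha,\beta}$ is monic of degree $\ell(\beta)-\ell(\alpha)$. With this dictionary the recurrences of Theorem~\ref{T:Rrecurrence} become the engine of the proof. I adopt the convention $R_{\alpha,\beta}=0$ whenever $\alpha\not\le\beta$, so that the identities of Theorem~\ref{T:Rrecurrence} (together with its final clause) may be read as polynomial identities for all pairs with $\ell(\sigma)\neq0$, exactly as in the classical Kazhdan--Lusztig setting.

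For part (a) I would argue as follows. The hypotheses $\theta<s\theta$ and $\sigma<s\sigma$ say that $s$ is a common left \emph{ascent}, equivalently that $s$ is a common left descent of $s\theta$ and $s\sigma$, since $s(s\theta)=\theta<s\theta$ and $s(s\sigma)=\sigma<s\sigma$. Applying the first case of Theorem~\ref{T:Rrecurrence} to the pair $(s\theta,s\sigma)$ gives $R_{s\theta,s\sigma}=R_{\theta,\sigma}$. Because $\theta<\sigma$, Proposition~\ref{prop:MainPropForPutchaR} yields $R_{\theta,\sigma}\neq0$, hence $R_{s\theta,s\sigma}\neq0$ and $s\theta\le s\sigma$; strictness is automatic, since $s\theta=s\sigma$ would force $\theta=\sigma$.

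For part (b) I would split into four cases according to the sign of $s$ on $\theta$ and on $\sigma$ (recall $s\theta\ge\theta$ and $s\sigma\le\sigma$ by hypothesis). When $s\theta=\theta$ and $s\sigma=\sigma$, both conclusions are the given inequality $\theta\le\sigma$. When $s\theta=\theta$ and $s\sigma<\sigma$, the second case of Theorem~\ref{T:Rrecurrence} gives $R_{\theta,\sigma}=qR_{\theta,s\sigma}$, so $R_{\theta,s\sigma}\neq0$ and $\theta\le s\sigma$, while $s\theta\le\sigma$ is immediate. When $s\theta>\theta$ and $s\sigma=\sigma$, the final clause of Theorem~\ref{T:Rrecurrence} gives $R_{\theta,\sigma}=qR_{s\theta,\sigma}$, so $s\theta\le\sigma$, while $\theta\le s\sigma$ is immediate.

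The remaining case $s\theta>\theta$, $s\sigma<\sigma$ is the heart of the matter, and I expect it to be the main obstacle. Here $s\in\Des_L(\sigma)$ and $s\theta>\theta$, so the third case of Theorem~\ref{T:Rrecurrence} gives
\[
R_{\theta,\sigma}=(q-1)R_{\theta,s\sigma}+qR_{s\theta,s\sigma}.
\]
Since $\theta<\sigma$, the left-hand side is monic of degree $d:=\ell(\sigma)-\ell(\theta)$ by Proposition~\ref{prop:MainPropForPutchaR}. If $\theta\not\le s\sigma$, then $R_{\theta,s\sigma}=0$ and the right-hand side reduces to $qR_{s\theta,s\sigma}$, whose degree is at most $1+\big(\ell(s\sigma)-\ell(s\theta)\big)=1+(d-2)=d-1$ (and is $-\infty$ if $s\theta\not\le s\sigma$); this contradicts $\deg R_{\theta,\sigma}=d$. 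Hence $\theta\le s\sigma$. To obtain $s\theta\le\sigma$ I would then feed this back into part (a): applying part (a) to $\theta\le s\sigma$ with the reflection $s$ --- legitimate because $\theta<s\theta$ and $s\sigma<s(s\sigma)=\sigma$ --- yields $s\theta<s(s\sigma)=\sigma$, the degenerate possibility $\theta=s\sigma$ forcing $s\theta=\sigma$ directly. This completes the case and the corollary. The two points needing genuine care are the degree bookkeeping in this last case and the justification that the recurrences of Theorem~\ref{T:Rrecurrence} remain valid under the zero-convention used in part (a); the latter can be avoided, if one prefers, by rephrasing part (a) through the standard forms $\theta=uev^{-1}$, $\sigma=xey^{-1}$ and the classical lifting property for $W$ applied to the data $u\le xw$, $yw\le v$ furnished by the criterion of \cite{PPR97}.
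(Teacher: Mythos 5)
Your handling of part (a) and of the equality cases of (b) coincides with the paper's own proof: the paper likewise disposes of exactly these cases ``to make matters short'' by combining Theorem~\ref{T:Rrecurrence} with Proposition~\ref{prop:MainPropForPutchaR}, and it implicitly reads the recurrences with the same zero convention you adopt. That convention is legitimate here, since Corollary~\ref{C:Putcha41} defines $R_{\theta,\sigma}$ for \emph{all} pairs (supported on $\theta\le\sigma$), and the recurrences arise as coefficient identities in $\widehat{\mathcal{H}}(e)$ from $\overline{A}_\sigma=\overline{A}_s\,\overline{A}_{s\sigma}$, which need no order hypothesis on $\theta$. Where you genuinely diverge is the strict case $s\theta>\theta$, $s\sigma<\sigma$, which the paper singles out as ``what remains to be shown.'' There the paper argues combinatorially: $s\sigma<\sigma$ forces $sx<x$ and $sx\in D_e$ (Lemma~\ref{defn.coset.descent} and Corollary~\ref{coset.descent}), so $(sx)ey^{-1}$ is the standard form of $s\sigma$; $s\theta>\theta$ forces $u<su$; then, taking $w\in W(e)$ with $u\le xw$ and $yw\le v$ from the order criterion of \cite{PPR97}, either $u\le xw\le s(xw)$ directly, or the classical lifting property in $W$ gives $u\le s(xw)=(sx)w$; in both cases $\theta\le s\sigma$. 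The closing move --- deducing $s\theta\le\sigma$ by applying (a) to $\theta<s\sigma$, with the degenerate case $\theta=s\sigma$ giving $s\theta=\sigma$ --- is identical in your proposal and in the paper. Your degree bookkeeping in this case is arithmetically correct as stated.

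The one substantive caveat concerns circularity, and it is sharper than the point you flagged about the zero convention. Your strict-case argument rests on the degree/monicity half of Proposition~\ref{prop:MainPropForPutchaR}, which the paper only sketches (``proved similarly via induction''). If one actually runs that induction in the configuration $s\in\Des_L(\sigma)$, $s\theta>\theta$, one must already know $\theta\le s\sigma$ to see that $(q-1)R_{\theta,s\sigma}$ supplies the leading term $q^{\ell(\theta,\sigma)}$; were $\theta\not\le s\sigma$ possible, the recursion would output $qR_{s\theta,s\sigma}$ of degree $\ell(\theta,\sigma)-1$ and the induction could not close. (In the constant-term half the paper's assertion ``Note that $\theta\le s\sigma$'' can be recovered a posteriori from the nonvanishing of the constant term via Corollary~\ref{C:Putcha41}(ii), but the degree half has no such bootstrap.) So the monic-of-degree-$\ell(\theta,\sigma)$ statement is not logically independent of the very inequality $\theta\le s\sigma$ you derive from it; in the paper's architecture it is precisely the combinatorial proof of the strict case (standard forms plus \cite{PPR97} plus classical lifting in $W$) that breaks this circle, and the dependencies linearize as: combinatorial $\theta\le s\sigma$, then Proposition~\ref{prop:MainPropForPutchaR}, then part (a), then $s\theta\le\sigma$. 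Treated as a derivation from Proposition~\ref{prop:MainPropForPutchaR} quoted as a finished black box, your argument is valid; treated as part of a self-contained development, it assumes what it proves. Your own suggested fallback --- rephrasing through standard forms and the classical lifting property applied to the data $u\le xw$, $yw\le v$ --- is exactly the paper's proof, so you have correctly identified the necessary repair.
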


\begin{proof} To make matters short, use  Theorem \ref{T:Rrecurrence} and Proposition \ref{prop:MainPropForPutchaR} to prove (a) or (b) if any of the inequalities is an equality, .
 
What remains to be shown is (b) in the strict case:  $s \theta > \theta$ and $s\sigma < \sigma$.

Because $s\sigma<\sigma$, by Lemma \ref{defn.coset.descent}, $sx<x$ and 
by Corollary \ref{coset.descent} (b),  $sx\in D_e$. Thus, the standard form for $s\sigma$ is $(sx)ey^{-1}$. 

Since $s\theta>\theta$, we observe that $s u \not\leq u$. Otherwise,
$(su)ev^{-1}$ is the standard form of $s\theta$ resulting in a contradiction
$\ell(s\theta)<\ell(\theta)$.
 
As $\theta\leq \sigma$, there is $w\in W(e)$ so that $u\leq xw$ and $yw\leq v$.
First, we prove $\theta \leq s \sigma$.
\begin{itemize}
\item Case $xw \leq s(xw)$: Then $u \leq xw \leq sxw$. Because $(sx)ey^{-1}$ is
the standard form of $s\sigma$, we conclude that $\theta \leq s \sigma$.

\item Case $s(xw) \leq xw$: Apply the lifting property for $W$ to $u$ and $xw$.
So, $u \leq s(xw)=(sx)w$ and again, we get $\theta \leq s \sigma$.
\end{itemize}
 
To prove $s\theta \leq \sigma$, apply (a) to the pair $\theta < s\sigma$.

\end{proof}

\noindent 
Let $q_{\sigma}$ and   $\varepsilon_{\sigma}$ denote, respectively, $q^{\ell(\sigma)}$, and $(-1)^{\ell(\sigma)}$ for $\sigma \in WeW$.


\begin{prop} \label{P:RR=d}
For all $\theta, \sigma \in WeW$,
\begin{equation} \label{eqn:RtimesRtildeEqualsDelta}
\sum_{\theta \leq \nu \leq \sigma}  { R_{\theta,\nu}} 
q_{\sigma} q_{\nu}^{-1} \overline{R_{\nu, \sigma}} = \delta_{\theta, \sigma}. 
\end{equation}
\end{prop}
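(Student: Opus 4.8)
The plan is to prove the identity \eqref{eqn:RtimesRtildeEqualsDelta} by exploiting the fact that the bar involution on $\widehat{\mathcal{H}}(e)$ is an involution, exactly as in the classical Weyl group case where the analogous orthogonality of $R$-polynomials follows from $\overline{\overline{A_\sigma}} = A_\sigma$. Concretely, I would start from the defining formula in Corollary \ref{C:Putcha41}(i), namely $\overline{A}_{\sigma} = q^{\ell(\sigma)-\ell(e)} \sum_{\theta \leq \sigma} \overline{R_{\theta, \sigma}} A_{\theta}$, which expresses $\overline{A}_\sigma$ as a $\Z[q^{-1/2},q^{1/2}]$-linear combination of the basis elements $\{A_\theta\}$. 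Since the basis $\{A_\sigma\}_{\sigma \in WeW}$ is fixed by applying the bar operation twice, I would apply the involution a second time and read off the coefficient of $A_\theta$ in $\overline{\overline{A_\sigma}}$, then set that equal to the Kronecker delta $\delta_{\theta,\sigma}$.

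The key computation is the following substitution. Applying bar to $\overline{A}_\sigma$ and using that bar sends $q^{1/2}$ to $q^{-1/2}$ (so $q^{\ell(\sigma)-\ell(e)} \mapsto q^{-(\ell(\sigma)-\ell(e))}$ and $\overline{R_{\nu,\sigma}} \mapsto R_{\nu,\sigma}$), one gets
\begin{equation*}
A_{\sigma} = \overline{\overline{A_\sigma}} = q^{-(\ell(\sigma)-\ell(e))} \sum_{\nu \leq \sigma} R_{\nu,\sigma} \, \overline{A_\nu} = q^{-(\ell(\sigma)-\ell(e))} \sum_{\nu \leq \sigma} R_{\nu,\sigma} \, q^{\ell(\nu)-\ell(e)} \sum_{\theta \leq \nu} \overline{R_{\theta,\nu}} A_\theta.
\end{equation*}
Collecting the coefficient of $A_\theta$ and comparing with the left-hand side $A_\sigma$ yields
\begin{equation*}
\delta_{\theta,\sigma} = \sum_{\theta \leq \nu \leq \sigma} q^{\ell(\nu)-\ell(\sigma)} R_{\nu,\sigma}\, \overline{R_{\theta,\nu}},
\end{equation*}
and after rewriting $q^{\ell(\nu)-\ell(\sigma)} = q_\sigma^{-1} q_\nu = (q_\sigma q_\nu^{-1})^{-1}$ this matches the desired equation up to a relabeling and the symmetry of the $R$-polynomial relations. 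I should take care to sort out which factor of $q$ attaches to which index, since the statement pairs $R_{\theta,\nu}$ (unbarred) with $\overline{R_{\nu,\sigma}}$ and carries the factor $q_\sigma q_\nu^{-1}$; matching the two forms may require one more application of the involution or an index swap $\theta \leftrightarrow \sigma$, using that $\{A_\sigma\}$ is a basis so the coefficient extraction is unambiguous.

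The main obstacle I anticipate is purely bookkeeping rather than conceptual: keeping the powers of $q$ and the placement of the bars consistent between the two applications of the involution, and verifying that the sum in the statement, indexed $\theta \leq \nu \leq \sigma$, genuinely coincides with the coefficient I extract (which a priori runs over $\nu$ with $\theta \leq \nu \leq \sigma$ once the support conditions $R_{\nu,\sigma} \neq 0 \Leftrightarrow \nu \leq \sigma$ and $\overline{R_{\theta,\nu}} \neq 0 \Leftrightarrow \theta \leq \nu$ from Proposition \ref{prop:MainPropForPutchaR} are invoked to truncate the range). A subtlety worth flagging is that the remark preceding this proposition notes $\overline{R_{\theta,\sigma}}$ need not equal $\varepsilon_\theta \varepsilon_\sigma q_\theta q_\sigma^{-1} R_{\theta,\sigma}$ when the constant term vanishes, so I would be careful to work directly with $\overline{R_{\theta,\nu}}$ as it appears and not to substitute any such closed form. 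The whole argument is then a formal consequence of $\overline{\,\overline{\phantom{x}}\,} = \mathrm{id}$ together with linear independence of the basis, and should close cleanly once the exponents are tracked.
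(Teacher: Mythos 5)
Your proof is correct and is exactly the standard Kazhdan--Lusztig-style argument this proposition rests on (the paper states Proposition \ref{P:RR=d} without proof, as it is the formal consequence of $\overline{\overline{A_\sigma}}=A_\sigma$, triangularity from Corollary \ref{C:Putcha41}(ii)--(iii), and linear independence of $\{A_\theta\}_{\theta\in WeW}$). The mismatch you flag at the end is resolved precisely by the option you name: applying the bar involution entrywise to your derived identity $\sum_{\theta\le\nu\le\sigma} \overline{R_{\theta,\nu}}\, q_\nu q_\sigma^{-1} R_{\nu,\sigma}=\delta_{\theta,\sigma}$ turns it into the stated form, so no index swap is needed.
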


We call an interval $[\theta,\sigma]$ linear if the interval $[\theta, \sigma]$ is totally ordered with respect to Bruhar-Chevalley order. In this case, 
the interval $[\theta,\sigma]$ has $\ell(\theta,\sigma)+1$ elements.

Using the above proposition together with Proposition \ref{prop:MainPropForPutchaR},
one can classify length $2$ intervals $[\theta,\sigma]\subset WeW$ 
with respect to 
their $R$-polynomials or equivalently, with respect to the constant terms of their
$R$-polynomials:

\begin{equation} \label{table:Length2Intervals}
\begin{array}{|c|c|c|c|c|}
\hline 
\text{Shape of Bruhat Graph} & \text{Number of Elements} & \text{Example:} \; [\theta,\sigma] \subset {\mathbf M}_4(\F_q) & 
R_{\theta,\sigma}(q) & R_{\theta,\sigma}(0) \\
\hline\hline
linear & 3 & (0001)<(0003) & q(q-1) & 0\\
\hline
diamond & 4 & (0012)<(0023) & (q-1)^2 & 1 \\
\hline
\end{array}
\end{equation}

Length $2$ intervals will play a very crucial role later on.


The Mobius function $\mu_{\theta,\sigma}$ corresponding to the interval $[\theta,\sigma]$ is defined by
\begin{equation}
\mu_{\theta,\sigma} :=
\begin{cases}
0 & \, \text{if $\theta \not\leq \sigma$}, \\
1 & \, \text{if $\theta=\sigma$}, \\
-\sum_{\theta \leq \tau < \sigma} \mu_{\theta,\tau} & \, \text{if $\theta < \sigma$}.
\end{cases}
 \end{equation}

\begin{cor} 
For  $\theta,\sigma\in WeW$, \[\mu_{\theta,\sigma}=R_{\theta,\sigma}(0).\]
\end{cor}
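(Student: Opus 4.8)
The plan is to prove that $\mu_{\theta,\sigma} = R_{\theta,\sigma}(0)$ by induction on $\ell(\theta,\sigma)$, showing that the constant term $R_{\theta,\sigma}(0)$ satisfies the defining recurrence of the M\"obius function. First I would dispose of the base cases: when $\theta \not\leq \sigma$, Proposition \ref{prop:MainPropForPutchaR} gives $R_{\theta,\sigma} = 0$, matching $\mu_{\theta,\sigma} = 0$; when $\theta = \sigma$, Corollary \ref{C:Putcha41}(iii) gives $R_{\theta,\theta} = 1$, so the constant term is $1 = \mu_{\theta,\theta}$. The substance of the argument is the case $\theta < \sigma$, where I must show
\[
R_{\theta,\sigma}(0) = -\sum_{\theta \leq \tau < \sigma} R_{\theta,\tau}(0),
\]
which, granting the inductive hypothesis $R_{\theta,\tau}(0) = \mu_{\theta,\tau}$ for all $\tau$ with $\theta \leq \tau < \sigma$, would immediately identify $R_{\theta,\sigma}(0)$ with $-\sum_{\theta \leq \tau < \sigma}\mu_{\theta,\tau} = \mu_{\theta,\sigma}$.

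The natural engine for producing this alternating-sum identity is Proposition \ref{P:RR=d}. Setting $\theta < \sigma$ in \eqref{eqn:RtimesRtildeEqualsDelta} gives $\delta_{\theta,\sigma} = 0$, so
\[
\sum_{\theta \leq \nu \leq \sigma} R_{\theta,\nu}\, q_\sigma q_\nu^{-1} \overline{R_{\nu,\sigma}} = 0.
\]
I would then evaluate this relation at a suitable specialization of $q$ that isolates constant terms. The key observation is that $q_\sigma q_\nu^{-1} = q^{\ell(\sigma)-\ell(\nu)} = q^{\ell(\nu,\sigma)}$, and that $\overline{R_{\nu,\sigma}}$ is the image of $R_{\nu,\sigma}$ under $q \mapsto q^{-1}$ (the bar involution), so that $q^{\ell(\nu,\sigma)}\overline{R_{\nu,\sigma}}$ is, up to sign and degree bookkeeping governed by Proposition \ref{prop:MainPropForPutchaR}, the reciprocal polynomial of $R_{\nu,\sigma}$. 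Since each $R_{\nu,\sigma}$ is monic of degree $\ell(\nu,\sigma)$, the highest-degree term of $R_{\nu,\sigma}$ contributes the constant term of $q^{\ell(\nu,\sigma)}\overline{R_{\nu,\sigma}}$, namely $1$. Reading off the constant term of the entire relation therefore leaves exactly $\sum_{\theta \leq \nu \leq \sigma} R_{\theta,\nu}(0) = 0$, which upon separating the $\nu = \sigma$ term (where $R_{\theta,\sigma}(0)$ appears with coefficient $1$) rearranges into the desired recurrence.

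The main obstacle I anticipate is the degree and reciprocity bookkeeping in the second step: extracting the constant term of $q_\sigma q_\nu^{-1}\overline{R_{\nu,\sigma}}$ cleanly requires knowing both the degree and the leading coefficient of $R_{\nu,\sigma}$, and the Remark following Proposition \ref{prop:MainPropForPutchaR} explicitly warns that the reciprocity relation $\overline{R_{\theta,\sigma}} = \varepsilon_\theta\varepsilon_\sigma q_\theta q_\sigma^{-1} R_{\theta,\sigma}$ \emph{fails} precisely when $R_{\nu,\sigma}(0) = 0$. Thus I cannot simply substitute the naive reciprocal identity term by term. The careful point is that I only need the \emph{constant term} of $q^{\ell(\nu,\sigma)}\overline{R_{\nu,\sigma}}$, which is the \emph{leading coefficient} of $R_{\nu,\sigma}$, and this leading coefficient is $1$ by the monicity assertion of Proposition \ref{prop:MainPropForPutchaR} regardless of whether the constant term vanishes. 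So the potential pitfall dissolves once I phrase the extraction in terms of leading coefficients rather than invoking the full reciprocity formula. I would verify this matching of constant-to-leading coefficients explicitly, since that is the one place where the argument could silently break.
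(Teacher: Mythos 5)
Your proof is correct and follows essentially the same route as the paper: both derive the recurrence $R_{\theta,\sigma}(0)=-\sum_{\theta\leq\tau<\sigma}R_{\theta,\tau}(0)$ by extracting constant terms from the identity of Proposition \ref{P:RR=d} and matching it against the defining recursion of $\mu$. Your explicit observation that the constant term of $q_{\sigma}q_{\nu}^{-1}\overline{R_{\nu,\sigma}}$ is the leading coefficient of $R_{\nu,\sigma}$ (equal to $1$ by monicity, with no appeal to the reciprocity formula that fails when the constant term vanishes) is precisely the bookkeeping the paper leaves implicit in its terse ``evaluating at $q=0$'' step.
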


\begin{proof}
The equality is clear for $\theta \not\leq \sigma$ and $\theta=\sigma$. For $\theta<\sigma$, 
evaluating (\ref{eqn:RtimesRtildeEqualsDelta})  at $q=0$ yields
$$ R_{\theta,\sigma}(0)=-\sum_{\theta \leq \tau < \sigma} R_{\theta,\tau}(0).$$
Thus, $\mu_{\theta,\sigma}=R_{\theta,\sigma}(0)$.
\end{proof}

Putcha conjectures (Conjecture 2.7 \cite{Putcha04}) the following for reductive monoids, a subclass of monoids
of Lie type:

\[
\mu_{\theta,\sigma}=
\begin{cases}
(-1)^{\ell(\sigma)-\ell(\theta)} & \, \text{if every length $2$ interval in $[\theta,\sigma]$ has $4$ elements,} \\
0 & \, \text{otherwise.} 
\end{cases}
\]

We prove that
\begin{thm} \label{thm:PutchaConjectureAboutMu}
Putcha's conjecture holds for the  $\mathcal{J}$-classes in the monoids of Lie type.
\end{thm}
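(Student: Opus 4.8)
The plan is to leverage the two facts just established: the identity $\mu_{\theta,\sigma}=R_{\theta,\sigma}(0)$ and Proposition~\ref{prop:MainPropForPutchaR}, which together force $\mu_{\theta,\sigma}$ to equal either $0$ or $(-1)^{\ell(\theta,\sigma)}$. Since the value in the nonzero case already matches the conjecture, the whole theorem reduces to the purely combinatorial equivalence
\[
R_{\theta,\sigma}(0)\neq 0 \quad\Longleftrightarrow\quad \text{every length $2$ subinterval of }[\theta,\sigma]\text{ is a diamond.}
\]
I will call the right-hand condition \emph{thinness}. The length-$2$ base case is precisely the classification in~\eqref{table:Length2Intervals}: a linear interval has $R_{\theta,\sigma}(0)=0$, while a diamond has $R_{\theta,\sigma}(0)=1$.

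I would prove this equivalence by induction on $\ell(\sigma)$, the length of the top element. The case $\ell(\sigma)=0$ is trivial. For $\ell(\sigma)>0$ there is a descent, and I may assume without loss of generality that $s\in\Des_L(\sigma)$, the case $s\in\Des_R(\sigma)$ being symmetric, so that $s\sigma<\sigma$ and $\ell(s\sigma)=\ell(\sigma)-1$. Writing $\theta=uev^{-1}$ in standard form, Corollary~\ref{coset.descent} translates the three alternatives $s\theta<\theta$, $s\theta=\theta$, $s\theta>\theta$ of Theorem~\ref{T:Rrecurrence} into the conditions $su<u$; $su=us'$ with $s'\in W_e\cap S$ (an ``internal'' ascent); and $su>u$ with $su\in D_e$, respectively.

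In the first and third alternatives, Theorem~\ref{T:Rrecurrence} gives respectively $R_{\theta,\sigma}(0)=R_{s\theta,s\sigma}(0)$ and $R_{\theta,\sigma}(0)=-R_{\theta,s\sigma}(0)$, and in each case the interval on the right has top element $s\sigma$ of length $\ell(\sigma)-1$, so the inductive hypothesis applies. The claim then reduces to a \emph{thinness transfer}: that $[\theta,\sigma]$ is thin if and only if the smaller interval---$[s\theta,s\sigma]$ in the first alternative, $[\theta,s\sigma]$ in the third---is thin. In the third alternative one implication is immediate, since $[\theta,s\sigma]$ is a subinterval of $[\theta,\sigma]$ and thinness descends to subintervals. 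For the remaining implications I would analyse the action of $s$ on the elements of the interval: using the Bruhat-order criterion of~\cite{PPR97} together with the lifting property (Corollary~\ref{LiftingProperty}), each element not already accounted for in the smaller interval is paired, via multiplication by $s$, with one that is, and this pairing matches the rank-$2$ subintervals of the two posets while respecting their diamond/linear type.

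The heart of the proof, and the step I expect to be the main obstacle, is the middle alternative $s\theta=\theta$, where Theorem~\ref{T:Rrecurrence} forces a factor of $q$ and hence $R_{\theta,\sigma}(0)=0$; here I must exhibit a \emph{linear} length-$2$ subinterval to certify that $[\theta,\sigma]$ is not thin. This is the genuinely monoid-theoretic situation, with no counterpart in a Weyl group: the reflection $s$ is a descent of $\sigma$ and yet is absorbed by $\theta$ through $su=us'$, $s'\in W_e\cap S$. By the lifting property one has $\theta\le s\sigma\lessdot\sigma$, and the plan is to show, again from the standard-form description of the order, that this asymmetric $s$-behaviour prevents $\sigma$ from having a second cover above a suitable $\alpha\ge\theta$ with $\ell(\sigma)-\ell(\alpha)=2$, leaving a three-element interval $[\alpha,\sigma]$. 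The supplementary identity $R_{\theta,\sigma}=qR_{s\theta,\sigma}$ for the case $s\sigma=\sigma$, $s\theta>\theta$, together with its right-handed mirror, is the same phenomenon with the roles of $\theta$ and $\sigma$ interchanged and is handled identically. Verifying that the absorption always produces a linear rather than a diamond interval, uniformly across all $\mathcal{J}$-classes, is exactly the point at which the combinatorics of $WeW$ departs from that of a Coxeter group, and I expect it to be the most delicate part of the argument.
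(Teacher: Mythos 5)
Your overall reduction is exactly the paper's: combine $\mu_{\theta,\sigma}=R_{\theta,\sigma}(0)$ with Proposition \ref{prop:MainPropForPutchaR}, note that Proposition \ref{P:subsabitleri}(b) together with the table (\ref{table:Length2Intervals}) already gives ``$R_{\theta,\sigma}(0)\neq 0$ implies every length $2$ subinterval is a diamond'' with no further induction, and then prove the converse in the contrapositive form of Proposition \ref{prop:LinearLengthTwoSubIntervalExists}: $R_{\theta,\sigma}(0)=0$ forces a \emph{linear} length $2$ subinterval. Your induction on $\ell(\sigma)$ with the three alternatives for $s\theta$ is also the skeleton of the paper's proof of that proposition, and your treatment of the alternative $s\theta>\theta$ (pass to the subinterval $[\theta,s\sigma]$, where the constant term still vanishes) matches the paper. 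The two remaining alternatives, however, are precisely where your proposal has genuine gaps rather than proofs.

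First, in the alternative $s\theta<\theta$ you assert that multiplication by $s$ ``pairs'' elements so as to match the rank-$2$ subintervals of $[s\theta,s\sigma]$ and $[\theta,\sigma]$ while respecting their linear/diamond type. That claim is too strong and would fail: left multiplication by $s$ is not a poset isomorphism between these intervals (in $WeW$, unlike in $W$, elements genuinely fixed by $s$ occur, and covers need not map to covers). What is actually needed, and what the paper proves, is a one-directional transport lemma: if $[\theta',\sigma']$ contains a linear length $2$ interval $[\alpha,\beta]$ and $\theta'<s\theta'$, $\sigma'<s\sigma'$, then $[s\theta',s\sigma']$ contains one. Its proof is a case analysis on $s\alpha$ and $s\beta$ using the lifting property (Corollary \ref{LiftingProperty}) and the recurrence: if $s\alpha\leq\alpha$ the interval $[\alpha,\beta]$ itself already lies in $[s\theta',s\sigma']$; if $s\alpha>\alpha$ and $s\beta>\beta$ one gets $R_{s\alpha,s\beta}=R_{\alpha,\beta}=q(q-1)$, so $[s\alpha,s\beta]$ is the required linear interval; and the residual case $s\beta=\beta$ is handed off to Lemma \ref{lem:FixedElementImpliesLinearLength2Interval}. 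Second, and more seriously, in the crucial alternative $s\theta=\theta$ your plan aims at a statement stronger than what is true: you try to produce a three-element interval $[\alpha,\sigma]$ with $\ell(\sigma)-\ell(\alpha)=2$, i.e.\ a linear interval whose \emph{top element is $\sigma$}. Nothing guarantees the linear interval sits at the top; the $s$-fixed elements may occur far below $\sigma$, with all co-length-$2$ intervals under $\sigma$ being diamonds. The paper's Lemma \ref{lem:FixedElementImpliesLinearLength2Interval} instead chooses $\alpha$ \emph{maximal} among elements of $[\theta,\sigma]$ fixed by $s$, takes any $\beta$ covering $\alpha$ in $[\alpha,\sigma]$ (so $s\beta\neq\beta$ by maximality), rules out $s\beta<\beta$ because the recurrence would give the absurdity $q-1=R_{\alpha,\beta}=qR_{\alpha,s\beta}$, and concludes $s\beta>\beta$ with $s\beta\leq\sigma$ by lifting; then the middle case of Theorem \ref{T:Rrecurrence} yields $R_{\alpha,s\beta}=qR_{\alpha,\beta}=q(q-1)$, so $[\alpha,s\beta]$ --- whose top is $s\beta$, in general strictly below $\sigma$ --- is the desired linear length $2$ interval. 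Without this maximal-fixed-element device (or an equivalent), your middle case, which you yourself identify as the heart of the matter, does not go through.
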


To prove this theorem, we examine the interplay between
the $R$-polynomial $R_{\theta,\sigma}$, 
the interval it belongs to $[\theta,\sigma]$ and the subintervals $[\alpha,\beta]$
contained in $[\theta,\sigma]$, 
esp. of length $2$ subintervals of $[\theta,\sigma]$.

Next, we prove a relation between $R_{\theta,\sigma}(0)$ and the $R_{\alpha,\beta}(0)$ for a subinterval $[\alpha,\beta]$ of $[\theta,\sigma]$. 

\begin{prop}\label{P:subsabitleri}
Given an interval $[\theta,\sigma]$ with $R_{\theta,\sigma}(0)\neq 0$,
then

\begin{itemize}
\item[(a)] If $s\sigma<\sigma$ 
(or, $s\theta>\theta$, $\sigma s < \sigma$, $\theta s > \theta$) 
for some simple reflection $s$, then for any $\alpha \in [\theta,\sigma]$,
$s\alpha\neq \alpha$.

\item[(b)]
For a subinterval $[\alpha,\beta]$ of $[\theta,\sigma]$ in 
$WeW$, $R_{\alpha,\beta}(0)\neq 0$.
\end{itemize}

\end{prop}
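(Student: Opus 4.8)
The plan is to prove (a) and (b) together by induction on $\ell(\sigma)$, driving the induction by a descent of the \emph{top} element. The starting observation is that evaluating the recurrence of Theorem \ref{T:Rrecurrence} at $q=0$ collapses it to a clean rule for constant terms: if $s\in\Des_L(\sigma)$, then
\[
R_{\theta,\sigma}(0)=
\begin{cases}
R_{s\theta,s\sigma}(0) & \text{if } s\theta<\theta,\\
0 & \text{if } s\theta=\theta,\\
-R_{\theta,s\sigma}(0) & \text{if } s\theta>\theta,
\end{cases}
\]
with the symmetric statement on the right. Since $R_{\theta,\sigma}(0)\neq0$, the middle line already forces $s\theta\neq\theta$ whenever $s\sigma<\sigma$, and dually (using the last line of Theorem \ref{T:Rrecurrence}) it forces $s\sigma\neq\sigma$ whenever $s\theta>\theta$. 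This settles the endpoints $\alpha\in\{\theta,\sigma\}$ of (a) and seeds the induction.

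For part (a) I would argue by contradiction. Assume $s\sigma<\sigma$ and that some $\alpha\in[\theta,\sigma]$ has $s\alpha=\alpha$. Splitting on whether $s\theta<\theta$ or $s\theta>\theta$, the constant-term rule shows that the relevant reduced $R$-polynomial — $R_{s\theta,s\sigma}$ in the first branch, $R_{\theta,s\sigma}$ in the second — still has nonzero constant term, and the Lifting Property (Corollary \ref{LiftingProperty}) relocates $\alpha$ into the corresponding reduced interval $[s\theta,s\sigma]$ or $[\theta,s\sigma]$, still fixed by $s$. The decisive point is that in the reduced interval the reflection $s$ now occurs as an \emph{ascent of the bottom} rather than a descent of the top; this is exactly why (a) must be asserted for all four hypotheses at once, so that the inductive hypothesis (applicable because $\ell(s\sigma)=\ell(\sigma)-1$) applies to the relocated $\alpha$ and returns $s\alpha\neq\alpha$, the contradiction. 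The four hypotheses are handled simultaneously: the left/right pair by the evident left-right symmetry of Theorem \ref{T:Rrecurrence}, and the top/bottom pair precisely because this reduction step turns one into the other.

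With (a) available, part (b) is bookkeeping on the same induction. Given $[\alpha,\beta]\subseteq[\theta,\sigma]$ and a descent $s$ of $\sigma$, part (a) guarantees that \emph{no} element of $[\theta,\sigma]$ is fixed by $s$; hence running the constant-term recurrence on $[\alpha,\beta]$ never meets the killing (middle) case, so $R_{\alpha,\beta}(0)$ equals, up to sign, the constant term of an $R$-polynomial attached to a subinterval of a reduced interval $[s\theta,s\sigma]$ or $[\theta,s\sigma]$. Corollary \ref{LiftingProperty} places $[\alpha,\beta]$, or its image under $s$, inside the appropriate reduced interval, whose top has length $\ell(\sigma)-1$, and the inductive hypothesis for (b) finishes the argument. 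The one wrinkle is when $s$ is an ascent of $\beta$ rather than a descent: there I first replace $\beta$ by $s\beta$, which the Lifting Property keeps inside $[\theta,\sigma]$, reducing to the descent case.

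The step I expect to be the main obstacle is the well-foundedness of the whole scheme. The descent--descent reduction $R_{\theta,\sigma}(0)=R_{s\theta,s\sigma}(0)$ preserves $\ell(\theta,\sigma)$, and the dangerous ``ascent of the bottom, ascent of the top'' configuration naively wants to pass to $[s\theta,s\sigma]$, whose top is \emph{longer}. The remedy is to always reduce along a descent of the top $\sigma$, which exists whenever $\theta<\sigma$ since then $\ell(\sigma)>0$ and $\sigma$ has a descent by the corollary following Lemma \ref{defn.coset.descent}; this makes every reduction strictly decrease $\ell(\sigma)$. The delicate part is then the faithful transfer of the fixed element $\alpha$ and of the subinterval $[\alpha,\beta]$ across the reduction via Corollary \ref{LiftingProperty}, and it is here that the four one-sided hypotheses of (a) genuinely interlock.
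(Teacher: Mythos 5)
Your proposal follows essentially the same route as the paper's own proof: a simultaneous induction on $\ell(\sigma)$, specializing the recurrences of Theorem \ref{T:Rrecurrence} at $q=0$, reducing along a descent $s$ of the top to the interval $[\rho, s\sigma]$ with $\rho=\min\{\theta,s\theta\}$ (where $s$ becomes an ascent of the bottom --- precisely the interlocking of the four hypotheses you describe), and transferring $\alpha$, resp.\ $[\alpha,\beta]$, into the reduced interval via the Lifting Property (Corollary \ref{LiftingProperty}). The ``ascent-ascent'' configuration you flag as delicate is handled no more explicitly in the paper, which simply declares the remaining cases of (a) ``virtually the same,'' so your attempt matches the published argument in both strategy and level of detail.
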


In the proof, we will prove (a) for the case $s\sigma<\sigma$, other cases
being virtually the same.

\begin{proof}
Prove by induction on $\ell(\sigma)$. If $\ell(\sigma)\leq 1$,  then $R_{\theta,\sigma}(q)=(q-1)^{\ell(\theta,\sigma)}$ 
and both statements hold trivially.
 
\noindent{\bf Induction step.}
Note that $s\theta \neq \theta$. If $s\theta<\theta$,
then one can apply induction to $[s\theta,s\sigma]$ as 
$R_{s\theta,s\sigma}(0)=R_{\theta,\sigma}(0)\neq 0$. 
If $s\theta>\theta$,
then one can apply induction to $[\theta,s\sigma]$ as 
$R_{\theta,s\sigma}(0)=-R_{\theta,\sigma}(0)\neq 0$. 

Let $\rho:=\min\{\theta,s\theta\}$. By definition, $s\rho>\rho$ and by lifting
property, for any 
$\alpha \in [\theta, \sigma]$,  $\rho\leq \min\{\alpha, s\alpha\}$. In short, we can apply 
induction to $[\rho,s\sigma]$ since $\ell(s\sigma)<\ell(\sigma)$. 

(a) $R_{\theta,\sigma}(0) \neq 0$ and a simple reflection $s$ 
so that $s\sigma<\sigma$ is given. 
If for all $\alpha\in [\theta,\sigma]$, $s\alpha\neq 0$, there is nothing to prove.

Assume that for some $\alpha \in [\theta, \sigma]$, $s\alpha=\alpha$. 
By lifting property, $\alpha \leq s\sigma$ and hence $\alpha\in [\rho, s\sigma]$. Use induction to conclude that $s\alpha\neq\alpha$ as $s\rho>\rho$. This gives a contradiction. Therefore, we conclude that there is no $\alpha\in [\theta,\sigma]$ such that $s\alpha=\alpha$.

%



(b) Assume that $s\sigma<\sigma$ for some $s\in S$.

Pick any interval $[\alpha,\beta]$ in $[\theta,\sigma]$. If $s\beta>\beta$,
by the lifting property $[\alpha,\beta] \subset [\rho,s\sigma]$ 
(apply the induction step).

Otherwise, $s\beta<\beta$. 

There are two cases:
\begin{itemize}
\item $s\alpha < \alpha$: Then, $R_{\alpha,\beta}(0)=R_{s\alpha,s\beta}(0)$
and $[s\alpha,s\beta] \subset [\rho,s\sigma]$ (apply the induction step).
\item $s\alpha > \alpha$: Then, $R_{\alpha,\beta}(0)=-R_{\alpha,s\beta}(0)$
and $[\alpha,s\beta] \subset [\rho,s\sigma]$ (apply the induction step).
\end{itemize}
\end{proof}

One might wonder if for all \emph{proper} subintervals 
$[\alpha,\beta] \subset [\theta,\sigma]$, then $R_{\alpha,\beta}(0)\neq 0$, is it
true that $R_{\theta,\sigma}(0)\neq 0$? Unfortunately, the answer is no.
As a counterexample, take any linear length $2$ interval $[\theta,\sigma]$.
Then, any proper subinterval $[\alpha,\beta]$ is of length $\leq 1$, hence 
$R_{\alpha,\beta}(0) \neq 0$, yet  $R_{\theta,\sigma}(0) = 0$.

\section{Intervals of Length $\leq 2$}
It is clear that an interval $[\theta,\sigma]$ in $WeW$ with $R_{\theta,\sigma}(0)=0$ can never be embedded into some Weyl group $W'$
as a subinterval $[u,v]$
so that the $R_{\theta,\sigma}(q)$ equals $R_{u,v}(q)$ for the simple
reason that $R_{u,v}(q)=(-1)^{\ell(u,v)}\neq 0$. In fact, more is true. We prove 
the following fact about the Bruhat graphs of the intervals $[\theta,\sigma]$
with $R_{\theta,\sigma}(0)=0$:

\begin{thm} \label{thm:NoEmbedding}
An interval $[\theta, \sigma]$ in $WeW$ cannot be
embedded into any Weyl group $W'$ as a {\em subinterval} if $R_{\theta,\sigma}(0)=0$.
\end{thm}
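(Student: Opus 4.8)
The plan is to prove the contrapositive claim in a more useful quantitative form: I want to show that if $[\theta,\sigma] \subset WeW$ embeds into a Weyl group $W'$ as a subinterval $[u,v]$ (meaning a poset isomorphism onto an interval $[u,v]$ in the Bruhat order), then $R_{\theta,\sigma}(0) \neq 0$. The natural bridge between the two worlds is the M\"obius function, which is a purely order-theoretic invariant of an interval. By the corollary above, $\mu_{\theta,\sigma} = R_{\theta,\sigma}(0)$ in $WeW$, while in $W'$ the classical theory (recorded in the Remark preceding Proposition \ref{prop:MainPropForPutchaR}) gives $\mu_{u,v} = R_{u,v}(0) = (-1)^{\ell(u,v)} \neq 0$. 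Since a poset isomorphism preserves the M\"obius function, $\mu_{\theta,\sigma} = \mu_{u,v} \neq 0$, and hence $R_{\theta,\sigma}(0) = \mu_{\theta,\sigma} \neq 0$, contradicting $R_{\theta,\sigma}(0)=0$.

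First I would make precise what ``embedded as a subinterval'' means, so that the argument is airtight: I take it to mean there is an isomorphism of posets $\phi\colon [\theta,\sigma] \to [u,v]$ where $[u,v]$ is a Bruhat interval in some Weyl group $W'$. The key structural fact I would invoke is that the M\"obius function of a locally finite poset depends only on its isomorphism type, being defined recursively by $\mu_{\alpha,\alpha}=1$ and $\mu_{\alpha,\beta} = -\sum_{\alpha \le \tau < \beta}\mu_{\alpha,\tau}$ purely in terms of the order relation. Thus $\phi$ carries the M\"obius function of $[\theta,\sigma]$ to that of $[u,v]$; in particular $\mu_{\theta,\sigma}=\mu_{u,v}$.

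Next I would assemble the two evaluations. On the Renner-monoid side, the Corollary stating $\mu_{\theta,\sigma}=R_{\theta,\sigma}(0)$ identifies our quantity with the M\"obius value. On the Weyl-group side, the classical fact that $R_{u,v}(q)$ is monic of degree $\ell(u,v)$ with constant term $(-1)^{\ell(u,v)}$, combined with the identity $\mu_{u,v}=R_{u,v}(0)$ valid in any Weyl group, shows $\mu_{u,v}=(-1)^{\ell(u,v)}$, which is never zero. Chaining the equalities $R_{\theta,\sigma}(0)=\mu_{\theta,\sigma}=\mu_{u,v}=(-1)^{\ell(u,v)}\neq 0$ gives exactly the contrapositive of the theorem.

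The step I expect to be the genuine content, rather than the formal skeleton, is justifying that $\mu_{u,v}=R_{u,v}(0)$ holds in the Weyl group $W'$ and that the displayed value $(-1)^{\ell(u,v)}$ is correct; this is where I would lean on the classical Kazhdan--Lusztig theory cited in the Remark, noting that the same recursion proved in our Corollary for $WeW$ specializes to the ordinary Weyl group recursion when $e=\mathrm{id}$, so the equality $\mu=R(0)$ is uniform across both settings. The one subtlety to watch is that the embedding is only assumed at the level of the \emph{poset}, not of any algebraic structure, so every invariant I use must be order-theoretic; the M\"obius function is precisely such an invariant, which is why routing the argument through $\mu$ rather than directly through the $R$-polynomials is the right move.
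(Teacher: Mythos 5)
Your proof is correct, but it takes a genuinely different route from the paper. The paper derives Theorem \ref{thm:NoEmbedding} from Proposition \ref{prop:LinearLengthTwoSubIntervalExists}: it shows by induction (via Lemmas \ref{lem:FixedElementImpliesLinearLength2Interval} and the lemma following it) that $R_{\theta,\sigma}(0)=0$ forces the existence of a \emph{linear} length $2$ subinterval, i.e.\ a three-element interval $[\alpha,\beta]\subset[\theta,\sigma]$; since every length $2$ Bruhat interval in a Weyl group is a four-element diamond, no poset isomorphism onto a Weyl group interval can exist. You instead route the argument through the M\"obius function: $\mu_{\theta,\sigma}=R_{\theta,\sigma}(0)$ by the Corollary to Proposition \ref{P:RR=d}, while $\mu_{u,v}=R_{u,v}(0)=(-1)^{\ell(u,v)}\neq 0$ in any Weyl group (Verma's classical computation, recoverable exactly as you say from the inversion identity of \cite{KL79} evaluated at $q=0$, or from the $e=\mathrm{id}$ specialization), and $\mu$ is an invariant of poset isomorphism type. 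This is sound and noticeably shorter --- it needs none of the inductive lifting-property combinatorics of Section 6 --- and it correctly handles the subtlety that the embedding is only order-theoretic, which is precisely why the paper's weaker opening observation in Section 6 (requiring the embedding to match $R$-polynomials) does not suffice. What the paper's longer route buys is strictly more information: Proposition \ref{prop:LinearLengthTwoSubIntervalExists} is an \emph{iff} characterization exhibiting a concrete local obstruction (the Bruhat graph of $[\theta,\sigma]$ already differs from any Weyl group interval at the level of length $2$ subintervals), and this characterization is what the authors need anyway to prove Putcha's conjecture in Theorem \ref{thm:PutchaConjectureAboutMu}, whereas your global M\"obius argument, while it yields the non-embedding theorem, would not by itself deliver that equivalence.
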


We prove this assertion by showing that 

\begin{prop} \label{prop:LinearLengthTwoSubIntervalExists}
Given an interval $[\theta,\sigma]$ in $WeW$,  
$R_{\theta,\sigma}(0)=0$ iff 
there exists a linear length $2$  interval $[\alpha,\beta]$ inside $[\theta,\sigma]$.

\end{prop}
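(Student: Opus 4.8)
The plan is to prove the biconditional in Proposition~\ref{prop:LinearLengthTwoSubIntervalExists} by proving its contrapositive in the forward direction together with a direct argument in the reverse direction, and the natural engine for both is induction on $\ell(\sigma)$ combined with the lifting property (Corollary~\ref{LiftingProperty}) and the recurrences of Theorem~\ref{T:Rrecurrence}. For the reverse direction (a linear length~$2$ subinterval forces $R_{\theta,\sigma}(0)=0$), I would argue contrapositively: if $R_{\theta,\sigma}(0)\neq 0$, then by Proposition~\ref{P:subsabitleri}(b) every subinterval $[\alpha,\beta]$ of $[\theta,\sigma]$ also has $R_{\alpha,\beta}(0)\neq 0$; but by the classification table~(\ref{table:Length2Intervals}) a length~$2$ interval has $R_{\alpha,\beta}(0)=0$ exactly when it is linear, so no linear length~$2$ subinterval can occur. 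This direction is essentially immediate from results already proved in the excerpt.

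The forward direction is the substantive content: I must show that $R_{\theta,\sigma}(0)=0$ guarantees the existence of some linear length~$2$ subinterval. Here I would set up induction on $\ell(\sigma)$, with the base cases $\ell(\theta,\sigma)\le 2$ handled directly (length~$0,1$ give $R_{\theta,\sigma}(0)=\pm 1\neq 0$, so vacuous; length~$2$ is exactly the table). For the inductive step, since $R_{\theta,\sigma}(0)=0$ and the constant term is zero, Proposition~\ref{prop:MainPropForPutchaR} tells me $\mu_{\theta,\sigma}=0$. Without loss of generality pick $s\in\Des_L(\sigma)$, so $s\sigma<\sigma$. I would analyze the recurrence of Theorem~\ref{T:Rrecurrence} according to the three cases for how $s$ acts on $\theta$. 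The key dichotomy is whether every element $\alpha\in[\theta,\sigma]$ satisfies $s\alpha\neq\alpha$ or whether some element is fixed by $s$.

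If some $\alpha\in[\theta,\sigma]$ has $s\alpha=\alpha$, I expect to extract a linear length~$2$ interval directly near $\alpha$: the relations $\alpha<s\beta$ together with a fixed point should produce a covering configuration with three elements rather than four, i.e.\ a diamond that collapses. Concretely, a fixed point $s\alpha=\alpha$ sitting inside an interval where $s$ lowers the top is precisely the obstruction that Proposition~\ref{P:subsabitleri}(a) rules out \emph{when} $R_{\theta,\sigma}(0)\neq 0$; so in our situation $R_{\theta,\sigma}(0)=0$ is exactly what permits such a fixed point, and I would leverage the middle case of the recurrence ($s\theta=\theta$ gives $R_{\theta,\sigma}=qR_{\theta,s\sigma}$, forcing $R_{\theta,\sigma}(0)=0$) to locate a linear length~$2$ window containing $\alpha$. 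If instead no element of $[\theta,\sigma]$ is fixed by $s$, then multiplication by $s$ gives an order-reversing-compatible map and I would push the problem to a strictly shorter interval: depending on whether $s\theta<\theta$ or $s\theta>\theta$, the recurrence identifies $R_{\theta,\sigma}(0)$ with $\pm R_{s\theta,s\sigma}(0)$ or $\pm R_{\theta,s\sigma}(0)$, each of which is still zero and sits over a top element of smaller length; the lifting property guarantees the shifted interval embeds back, so a linear length~$2$ subinterval found by induction transports to one inside $[\theta,\sigma]$.

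The main obstacle I anticipate is the fixed-point case: translating ``$R_{\theta,\sigma}(0)=0$ and $s$ fixes some $\alpha$'' into an \emph{explicit} linear length~$2$ subinterval requires understanding the local Bruhat structure around $\alpha$ in $WeW$ rather than in $W$, where the standard-form bookkeeping of Lemma~\ref{defn.coset.descent} and the failure of descents (some $\sigma$ having no descent at all) can break the clean Weyl-group intuition. I would reduce this to showing that a fixed point of $s$ covered (or covering) two elements swapped by $s$ yields the collapsed diamond, and I expect to need the length-$2$ classification~(\ref{table:Length2Intervals}) as the final identification step. Care must be taken that the transported subinterval from the inductive step genuinely lies inside $[\theta,\sigma]$ and not merely inside the shifted interval, which is where the precise statement of Corollary~\ref{LiftingProperty}(b) does the work.
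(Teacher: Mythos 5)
Your proposal is correct and follows essentially the same route as the paper: the reverse direction via the contrapositive of Proposition~\ref{P:subsabitleri}(b) together with the table~(\ref{table:Length2Intervals}), and the forward direction by induction on $\ell(\sigma)$ with a descent $s$, splitting into the fixed-point case (the paper isolates this as Lemma~\ref{lem:FixedElementImpliesLinearLength2Interval}, choosing a \emph{maximal} fixed element and a cover of it) and the case $s\theta<\theta$, where the inductively found interval in $[s\theta,s\sigma]$ is transported back via the lifting property (the paper's unnamed lemma preceding the proof). The two difficulties you flag --- extracting the collapsed diamond near a fixed point, and ensuring the transported subinterval lands in $[\theta,\sigma]$ --- are exactly the content of those two auxiliary lemmas, so the sketch fills in to the paper's argument.
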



Theorem \ref{thm:NoEmbedding} follows from this proposition because of the
well-known fact that
any interval $[u,v]$ of length $2$ in a Weyl group $W'$ is diamond shaped and contains $4$ elements.

\begin{lem} \label{lem:FixedElementImpliesLinearLength2Interval}
Given an interval $[\theta,\sigma]$ in $WeW$,
if there exists a simple reflection $s\in S$ such that $s\sigma<\sigma$ 
(or, $s\theta>\theta$, $\sigma s > \sigma$, $\theta s>\theta$)  
and $s\rho=\rho$  (resp. $\rho s = \rho$) for some $\rho \in [\theta,\sigma]$,
then $[\theta,\sigma]$ contains a linear length $2$ interval.
\end{lem}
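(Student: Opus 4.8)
The plan is to reduce to a single representative configuration and then run an induction on the length $\ell(\theta,\sigma)$ of the interval, using only the lifting property (Corollary~\ref{LiftingProperty}) together with the fact that $\ell$ is the rank function on $(WeW,\le)$. First I would treat the case $s\sigma<\sigma$ with $s\rho=\rho$; the three remaining configurations are mirror images, the right-descent cases following from the right-handed forms of Lemma~\ref{defn.coset.descent}, Corollary~\ref{coset.descent} and of the lifting property, and the ascent case $s\theta>\theta$ following from the order-dual induction sketched at the end. Since any linear length-$2$ interval contained in $[\rho,\sigma]$ is also contained in $[\theta,\sigma]$, I may pass to the subinterval $[\rho,\sigma]$ and thereby assume outright that the bottom element is fixed, i.e. $s\theta=\theta$ and $s\sigma<\sigma$. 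A first application of Corollary~\ref{LiftingProperty}(b) to $\theta<\sigma$ gives $\theta\le s\sigma<\sigma$, and a one-line argument rules out $\theta=s\sigma$ (else $s\theta=\sigma$), so in fact $\theta<s\sigma<\sigma$ and the interval has length at least $2$.

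The heart of the argument is the base case $\ell(\theta,\sigma)=2$, where I claim $[\theta,\sigma]$ is forced to be linear. Because $\ell$ is the rank function, a length-$2$ interval fails to be linear exactly when it carries two distinct elements of the middle rank $\ell(\theta)+1$; one such element is $s\sigma$, so I must exclude a second one. Supposing $\gamma'\neq s\sigma$ is a middle element, I would split into the three possibilities $s\gamma'<\gamma'$, $s\gamma'=\gamma'$, $s\gamma'>\gamma'$ and derive a contradiction in each by comparing ranks after an application of the lifting property: applying Corollary~\ref{LiftingProperty}(b) to $\theta<\gamma'$ in the first case forces $s\gamma'=\theta$ and hence $\gamma'=\theta$; applying it to $\gamma'<\sigma$ in the remaining two cases forces $\gamma'\le s\sigma$ (so $\gamma'=s\sigma$) respectively $s\gamma'\le\sigma$ (so $s\gamma'=\sigma$, i.e. $\gamma'=s\sigma$). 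Each outcome contradicts the choice of $\gamma'$, so the middle rank is a singleton and $[\theta,\sigma]$ is linear.

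For the inductive step ($\ell(\theta,\sigma)>2$) I would choose a cover $\gamma$ of $\theta$ lying below $\sigma$ (the first step of a saturated chain from $\theta$ to $\sigma$). The lifting property again excludes $s\gamma<\gamma$, so either $s\gamma>\gamma$ or $s\gamma=\gamma$. If $s\gamma>\gamma$, then Corollary~\ref{LiftingProperty}(b) applied to $\gamma<\sigma$ yields $s\gamma\le\sigma$, and the length-$2$ interval $[\theta,s\gamma]$ satisfies exactly the base-case hypotheses ($s\theta=\theta$ and $s(s\gamma)<s\gamma$); by the base case it is a linear length-$2$ interval sitting inside $[\theta,\sigma]$. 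If instead $s\gamma=\gamma$, then $[\gamma,\sigma]$ is a strictly shorter interval of the same type (fixed bottom $\gamma$, descending top $\sigma$), and I invoke the induction hypothesis. Either way a linear length-$2$ subinterval is produced, completing the induction.

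Finally, the dual configuration $s\theta>\theta$, $s\rho=\rho$ is handled by the order-reversed version of the same scheme: pass to $[\theta,\rho]$, note $\theta<s\theta<\rho$, and run the induction downward from the top using co-covers of $\rho$ in place of covers of $\theta$; the three-case exclusion of a second middle element goes through verbatim with the roles of $\theta$ and $\sigma$ interchanged, again needing only Corollary~\ref{LiftingProperty}(b). I expect the base case---showing that a fixed endpoint together with a strict descent (or ascent) at the opposite endpoint rigidly forces linearity, i.e. rules out the diamond---to be the only genuinely delicate point; the inductive bookkeeping and the passage between the four symmetric variants should be routine once the lifting property is invoked in the correct direction.
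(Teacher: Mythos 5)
Your proof is correct, but it takes a genuinely different route from the paper's. The paper's argument stays inside the $R$-polynomial machinery: it picks a \emph{maximal} element $\alpha$ of the set $\{\rho : s\rho=\rho\}$ and a cover $\beta$ of $\alpha$, so that $s\beta\neq\beta$ by maximality; it rules out $s\beta<\beta$ via the recurrence of Theorem \ref{T:Rrecurrence} (which would give the absurd identity $q-1=qR_{\alpha,s\beta}$), concludes $s\beta>\beta$ and $s\beta\leq\sigma$ by lifting, and then identifies $[\alpha,s\beta]$ as linear because $R_{\alpha,s\beta}=q(q-1)$ has vanishing constant term, invoking the classification (\ref{table:Length2Intervals}) of length-$2$ intervals by the constant term of their $R$-polynomial. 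You instead dispense with $R$-polynomials entirely, at least above the level of Corollary \ref{LiftingProperty} (whose proof in the paper does use them): your induction on $\ell(\theta,\sigma)$, with the purely order-theoretic base case showing that a length-$2$ interval with $s$-fixed bottom and $s$-descending top has a unique middle element, needs only part (b) of the lifting property together with the fact that $\ell$ is the rank function on $(WeW,\leq)$; and your case $s\gamma=\gamma$ in the inductive step absorbs what the paper handles by the maximality of $\alpha$. I checked the three-case exclusion of a second middle element, its order dual, the exclusion of $s\gamma<\gamma$ for a cover $\gamma$ of $\theta$, and the preliminary reduction $\theta<s\sigma<\sigma$; all are sound. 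What each approach buys: the paper's proof is shorter given the already-developed Hecke-algebra recurrences and needs no induction, while yours is more elementary and in fact re-derives, for these rigid configurations, the ``constant term $0$ iff linear'' half of the length-$2$ classification without any Hecke-algebra computation, and it yields the slightly stronger local statement that such a length-$2$ interval is \emph{itself} linear rather than merely containing one. One point to make explicit in a final write-up: the right-handed cases require the right-handed analogue of Corollary \ref{LiftingProperty}, which the paper states only on the left; it follows by the symmetric argument from the right-descent recurrences in Theorem \ref{T:Rrecurrence}, and since the paper glosses over these cases in exactly the same way (``all other cases are essentially proved the same way''), your appeal to it is acceptable.
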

\begin{proof} We prove the lemma for $s\sigma<\sigma$ as all other cases are essentially proved the same way.

By assumption, the set $\{\rho : s\rho=\rho\}$ is non-empty. Pick a maximal
element $\alpha$ in this set. Then, for all $\beta\in [\theta,\sigma]$ with
$\alpha < \beta$, $s\beta \neq \beta$.

Now pick $\beta \in [\alpha,\sigma]$ so that it covers $\alpha$.
By the choice of $\alpha$, $s\beta\neq \beta$ as indicated above.
Because $\beta$ covers $\alpha$, $R_{\alpha,\beta}(q)=q-1$. 

If it were that $s\beta<\beta$, then by the recurrence relations, 
Theorem \ref{T:Rrecurrence},
$q-1=qR_{\alpha,s\beta}(q)$, which implies that $\alpha=s\beta$ and $q-1=q$,
both being obvious contradictions.

Therefore, $s\beta > \beta$  and
$R_{\alpha,s\beta}(0)=0$. 
By lifting property, $s\beta \leq \sigma$. The subinterval $[\alpha,s\beta]$ is a linear length
$2$ interval in $[\theta,\sigma]$ as required.
\end{proof}

\begin{lem}
Let $[\theta,\sigma]$ be an interval which contains a linear length $2$ interval $[\alpha,\beta]$. Say for some $s\in S$, $\theta< s\theta$ and $\sigma< s\sigma$. Then, the interval $[s\theta, s\sigma]$ contains
a linear length $2$ interval.
\end{lem}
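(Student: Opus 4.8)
The plan is to argue directly, \emph{without} invoking Proposition \ref{prop:LinearLengthTwoSubIntervalExists}. Since that proposition is itself proved using the present lemma, the tempting shortcut of observing via the first case of Theorem \ref{T:Rrecurrence} that $R_{s\theta,s\sigma}=R_{\theta,\sigma}$ (as $s(s\sigma)=\sigma<s\sigma$ and $s(s\theta)=\theta<s\theta$), hence $R_{s\theta,s\sigma}(0)=0$, would be circular. Instead I will exhibit a linear length $2$ interval inside $[s\theta,s\sigma]$ explicitly, tracking the given interval $[\alpha,\beta]$ under left multiplication by $s$ and appealing to the lifting property and to Lemma \ref{lem:FixedElementImpliesLinearLength2Interval}. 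Throughout I use that $\theta<\sigma$ (so $[\theta,\sigma]$ has length $\geq 2$), that $s\theta<s\sigma$ by part (a) of Corollary \ref{LiftingProperty}, that $\beta\leq\sigma<s\sigma$, and that $s$ descends the top of $[s\theta,s\sigma]$, i.e. $s(s\sigma)=\sigma<s\sigma$.

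First I would record a structural constraint $(\ast)$ on linear length $2$ intervals: if $[\alpha,\beta]$ is linear of length $2$ and $s\beta<\beta$, then $s\alpha\leq\alpha$. This follows from Theorem \ref{T:Rrecurrence} applied at the top $\beta$: were $s\alpha>\alpha$, the third case would give $R_{\alpha,\beta}=(q-1)R_{\alpha,s\beta}+qR_{s\alpha,s\beta}$, and a degree count ($\ell(\alpha,s\beta)=1$ forces $R_{\alpha,s\beta}\in\{q-1,0\}$, while $\ell(s\alpha,s\beta)=0$ together with $s\alpha\neq s\beta$ forces $R_{s\alpha,s\beta}=0$) would yield $R_{\alpha,\beta}\in\{(q-1)^2,0\}$, contradicting $R_{\alpha,\beta}(q)=q(q-1)$ from table \ref{table:Length2Intervals}. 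So the ``ascending bottom, descending top'' configuration is impossible.

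Now I split on the behaviour of $s$ at the bottom $\alpha$. If $s\alpha\leq\alpha$, then $\theta<\alpha$ (since $\theta$ is $s$-ascending while $\alpha$ is not), and part (b) of Corollary \ref{LiftingProperty} applied to $(\theta,\alpha)$ gives $s\theta\leq\alpha$; with $\beta\leq s\sigma$ this shows $[\alpha,\beta]\subseteq[s\theta,s\sigma]$, so the given interval already lives inside $[s\theta,s\sigma]$ and we are done. If instead $s\alpha>\alpha$, then $(\ast)$ forbids $s\beta<\beta$, leaving two subcases. When $s\beta=\beta$, the element $\beta$ is an $s$-fixed point; part (b) of Corollary \ref{LiftingProperty} applied to $(\theta,\beta)$ places $\beta$ in $[s\theta,s\sigma]$, and since $s$ descends the top $s\sigma$, Lemma \ref{lem:FixedElementImpliesLinearLength2Interval} produces a linear length $2$ interval in $[s\theta,s\sigma]$. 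When $s\beta>\beta$, both endpoints are $s$-ascending, so the first case of Theorem \ref{T:Rrecurrence} at the top $s\beta$ gives $R_{s\alpha,s\beta}=R_{\alpha,\beta}=q(q-1)$; by table \ref{table:Length2Intervals} the length $2$ interval $[s\alpha,s\beta]$ is linear, and part (a) of Corollary \ref{LiftingProperty} applied to $(\theta,\alpha)$ and to $(\beta,\sigma)$ gives $s\theta\leq s\alpha$ and $s\beta\leq s\sigma$, i.e. $[s\alpha,s\beta]\subseteq[s\theta,s\sigma]$.

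The main obstacle is recognizing that the honest proof must be combinatorial rather than via the constant term, and then organizing the case analysis so that \emph{every} case lands a linear length $2$ interval genuinely \emph{inside} $[s\theta,s\sigma]$. The delicate point is containment: the naive candidate $[s\alpha,s\beta]$ only works when both endpoints ascend, while the bottom-descending case succeeds precisely because lifting forces $s\theta\leq\alpha$. The constraint $(\ast)$ is what eliminates the one remaining awkward configuration and confines the argument to three manageable cases; establishing $(\ast)$ and verifying membership of the relevant fixed point in the $s\beta=\beta$ case (again through the lifting property) are the steps that demand the most care.
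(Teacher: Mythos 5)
Your proof is correct and follows essentially the same route as the paper's: the same trichotomy on $s\alpha$ (descending bottom gives $[\alpha,\beta]\subseteq[s\theta,s\sigma]$ via lifting; $s$-fixed top invokes Lemma \ref{lem:FixedElementImpliesLinearLength2Interval}; both endpoints ascending gives $R_{s\alpha,s\beta}=R_{\alpha,\beta}=q(q-1)$, hence $[s\alpha,s\beta]$ linear inside $[s\theta,s\sigma]$). Your step $(\ast)$ ruling out the configuration $s\alpha>\alpha$, $s\beta<\beta$ is exactly the claim the paper asserts in one clause (``since $[\alpha,\beta]$ is linear, $s\beta\geq\beta$'') without justification, so your writeup is in fact slightly more complete, and your observation that invoking Proposition \ref{prop:LinearLengthTwoSubIntervalExists} would be circular is well taken.
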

\begin{proof}
Say $s\alpha \leq \alpha$. By lifting property, $s\theta \leq \alpha < \beta < \sigma < s\sigma$. 
The interval $[\alpha,\beta] \subset [s\theta, s\sigma]$ is the required one.

Otherwise, $s \alpha > \alpha$. Since $[\alpha,\beta]$ is a linear length $2$ interval, $s\beta \geq \beta$.
If $\beta > s\beta$, then $R_{s\alpha,s\beta}=R_{\alpha,\beta}$ and $s\theta \leq s\alpha < s\beta \leq s\sigma$.


If $s\beta=\beta$, then by lifting property $s\theta \leq \beta \leq \sigma < s\sigma$. The results follows by Lemma \ref{lem:FixedElementImpliesLinearLength2Interval}.

\end{proof}

\textbf{Proof of Proposition \ref{prop:LinearLengthTwoSubIntervalExists}.}
($\Leftarrow$) If there is such $\alpha,\beta$, then $R_{\alpha,\beta}(0)=0$
and the rest follows from Proposition \ref{P:subsabitleri}.

($\Rightarrow$) Prove by induction on the length $\ell(\sigma)$.

The base case is $\ell(\theta)=0$ and $\ell(\sigma)=2$ which follows 
from (\ref{table:Length2Intervals}). As usual, assume that $s\sigma<\sigma$
for a simple reflection $s\in S$. 


If $s\theta=\theta$, the result follows by Lemma \ref{lem:FixedElementImpliesLinearLength2Interval}.

If $s\theta > \theta$, then $\recursionRc{s}{\theta}{\sigma}$. Hence $0=R_{\theta,\sigma}(0)=R_{\theta,s\sigma}(0)$.
The length of the interval $[\theta, s\sigma]$ is $\ell(\theta,\sigma)-1$. Apply induction.

If $s\theta<\theta$, then apply the Lemma above and then induction. 
This ends the proof of the Proposition.

\textbf{Proof of Theorem \ref{thm:PutchaConjectureAboutMu}.}
We reiterate what we have already shown.

At this point, we have showed the following are equivalent: For a given interval $[\theta,\sigma]$ in $WeW$,
\begin{enumerate}
\item
$\mu_{\theta,\sigma}=R_{\theta,\sigma}(0)\neq 0$,
\item
$\mu_{\theta,\sigma}=R_{\theta,\sigma}(0)=(-1)^{\ell(\theta,\sigma)}$,
\item 
All length $2$ subintervals of $[\theta,\sigma]$ have $4$ elements
(and are diamond shaped).
\end{enumerate}
Otherwise, $\mu_{\theta,\sigma}=R_{\theta,\sigma}(0)=0$ and $[\theta,\sigma]$ contains a length $2$ subinterval with 
$3$ elements (a linear length $2$ subinterval).

\bibliographystyle{amsalpha}
 
\end{document}